\newlength{\defbaselineskip}
\newcommand{\setlinespacing}[1]%
           {\setlength{\baselineskip}{#1 \defbaselineskip}}
\newcommand{\dd }{\displaystyle}
\newcommand{\f }{\\[0.1cm]}
\newcommand{\s }{\\[0.2cm]}
\newcommand{\h }{\hspace*{18pt}}
\newcommand{\bt}{\beta}
\newcommand{\al}{\alpha}
\newcommand{\G}{\Gamma}
\newcommand{\g}{\gamma}
\newcommand{\la}{\lambda}
\newcommand{\q}{\quad}
\newcommand{\qq}{\qquad}
\newcommand{\bq }{\begin{equation}}
\newcommand{\eq }{\end{equation}}
\newcommand{\bbb }{\begin{eqnarray}}
\newcommand{\eee }{\end{eqnarray}}
\newcommand{\bb }{\begin{eqnarray*}}
\newcommand{\ee }{\end{eqnarray*}}
\newcommand{\ed }{\end{document}}
\theoremstyle{plain}
\newtheorem{thm}{Theorem}
\newtheorem{lem}{Lemma}
\newtheorem{cor} {Corollary}
\theoremstyle{definition}
\theoremstyle{example}
\begin{document}
\begin{center}

{ {\bf Nonsymmetric Generalized Jacobi Petrov-Galerkin Algorithms for Third- and
Fifth-Order two Point Boundary Value Problems}}\\[0.5cm]
{\textbf{\textsf{ E.H. Doha$^{a,\,}$\footnote{eiddoha@frcu.eun.eg},
W. M. Abd- Elhameed$^{b, a,\,}$\footnote{walee$_{-}$9@yahoo.com,
corresponding author},
 Y. H. Youssri$^{a,\,}$\footnote{youssri@sci.cu.edu.eg}}}}\\[0.2cm]
$^{a}$Department of Mathematics, Faculty of Science, Cairo
University, Giza-Egypt\\
$^{b}$Department of Mathematics,Faculty of Science,King Abdulaziz
University, Saudi Arabia
\end{center}
\hrule\[\]
\\[-0.4cm]
{\bf Abstract.} Two families of certain nonsymmetric generalized
Jacobi polynomials with negative integer indexes are used for
solving third- and fifth-order two point boundary value problems
subject to homogeneous and nonhomogeneous boundary conditions using
a dual Petrov-Galerkin method. The key
 idea behind our method is to use trial functions
 satisfying the underlying boundary conditions of the differential
 equations and the test functions satisfying the dual boundary
 conditions.The method leads to linear systems with specially structured matrices
that can be efficiently inverted. The use of generalized Jacobi
polynomials leads to simplified analysis, very efficient numerical
algorithms.
  Numerical results are presented to demonstrate the
 efficiency of our proposed algorithms.\s
  {\bf Keywords:} Dual-Petrov-Galerkin method, generalized Jacobi polynomials, nonhomogeneous
  Dirichlet conditions\s
 {\bf MSC}: 65M70, 65N35, 35C10, 42C10
\section{Introduction}
\h Spectral method, in the context of numerical schemes, was
introduced and popularized by Orszag's pioneer work in the early
seventies. The term spectral was probably originated from the fact
that the trigonometric functions $e^{ikx}$ are the eigenfunctions of
the Laplace operator with periodic boundary conditions. This fact
and the availability of Fast Fourier Transform (FFT) are two main
advantages of the Fourier spectral method. Thus, using Fourier
series to solve PDEs, with principal differential operator being the
Laplace operator (or its power) with periodic boundary conditions,
results in very attractive numerical algorithms. However, for
problems with rigid boundaries, the eigenfunctions of Laplace
operator (with non-periodic boundary conditions), although easily
available in regular domains, are no longer good candidates as basis
functions due to the Gibbs phenomenon . In such cases, it is well
known that one should use the eigenfunctions of the singular
Sturm-Liouville operator i.e., Jacobi polynomials with a suitable
pair of indexes. \s\h Standard spectral methods are capable of
providing very accurate approximations to well-behaved smooth
functions with significantly less degrees of freedom when compared
with finite difference or finite element methods (cf.
\cite{Boy2001},\cite{Can1988},\cite{GO1977}).\s \h Classical
orthogonal polynomials are used successfully  and
 extensively for the numerical solution of differential equations in
spectral and pseudospectral methods (see, for instance,
\cite{Bialecki}, \cite{Can1988,Cou1996}, \cite{DA2005}
,\cite{Heinrichs1989a} and \cite{Heinrichs1989b}).\s
 \h
The classical Jacobi polynomials\ $P_{n}^{(\alpha,\beta)}(x)$ play
 important roles in mathematical analysis and its applications (see \cite{Sze1985}). In particular,
 the Legendre, the Chebyshev, the ultraspherical polynomials have palyed important roles in spectral methods for
 partial differential equations (see, for instance, \cite{Boy2001},\cite{Fun1992}).
  It is proven that the
Jacobi polynomials are precisely the only polynomials arising as
eigenfunctions of a singular Sturm-Liouville problem, (see
\cite{Can1988}, Sec. 9.2). This class of polynomials comprises all
the polynomial solution to singular Sturm-Liouville problems on
$[-1,1]$. \s
 \h Guo et al. \cite{Guo2006} extended the definition of
 the classical Jacobi polynomials with indexes\ $\al,\bt>-1$ to
 allow\ $\al$\ and/or\ $\bt$\ to be negative integers. They showed also that the generalized
Jacobi polynomials, with indexes corresponding to the number of
boundary conditions in a given partial differential equation, are
the natural basis functions for the spectral approximation of this
equation. Moreover it is shown that the use of generalized Jacobi
polynomials not only simplified the numerical analysis for the
spectral approximations of differential equations, but also led to
very efficient numerical algorithms.\s\h Abd-Elhameed et al.
\cite{y1} and Doha et al. \cite{y} used the general parameter
generalized Jacobi polynomilas to handle third- and fifth-order
differential equations.\s\h The majority of books and research
papers dealing with the theory of ordinary differential equations,
or their practical applications to technology and physics, contain
mainly results from the theory of second-order linear differential
equations, and some results from the theory of some special linear
differential equations of higher even order. However there is only a
limited body of literature on spectral methods for dispersive,
namely, third- and fifth-order equations. This is partly due to the
fact that direct collocation methods for third- and fifth-order
boundary problems lead to
 condition numbers of high order, typically of order $N^{6}$ and $N^{10}$ respectively, where $N$\ is
 the number of retained modes. These high
condition numbers will lead to instabilities caused by rounding
errors (see, \cite{Hua} and \cite{Mer}). In this paper, we introduce
some efficient spectral algorithms for reducing these condition
numbers to be of $O(N^2)$ and $O(N^4)$ for third- and fifth-order
respectively, based on certain nonsymmetric generalized Jacobi
Petrov-Galerkin method.\s
  \h The study of odd-order equations is of interest,
for example, the third order equation is of fundamental mathematical
interest since it lacks symmetry. Also, it is of physical interest
since it contains a type of operator which appears in many commonly
occurring partial differential equations such as the Kortweg-de
Vries equation. Monographs like those of Mckelvey \cite{Mck}, which
include chapters on oscillation properties of third-order
differential equations, are exceptional. The interested reader in
applications of odd-order differential equations is referred to the
monograph by (Gregus \cite{Gre}), in which many physical and
engineering applications of third-order differential equations are
discussed [see, pp. 247-258].\s
 \h From the numerical point of view,  Abd-Elhameed \cite{Abd1}, Doha and Abd-Elhameed
 \cite{Doha and Abd-Elhameed2002,Doha and Abd-Elhameed2009}, Doha and Bhrawy \cite{Bhrawy} and
  Doha et al. \cite{Doha et al2008}
 have constructed efficient spectral-Galerkin algorithms using compact
 combinations of orthogonal polynomials for solving elliptic equations
 of the second-, fourth-, {\it 2n}th- and {\it (2n+1)th}-order in various
 situations. Recently, Doha and Abd-Elhameed \cite{Doha and Abd-Elhameed2012} have introduced a family of symmetric generalized Jacobi polynomials for solving multidimensional
 sixth-order two point boundary value problems by the Galerkin method. Also some other studies are
devoted to third- and fifth-order differential equations in finite
intervals (see, \cite{Ma1,Ma2}).\s
 \h  The main differential operator in odd-order
 differential equations
 is not symmetric, so it is convenient to use a Petrov-Galerkin
 method. The difference between Galerkin and Petrov-Galerkin methods, is
 that the test and
trial functions in Galerkin method are the same, but for
Petrov-Galerkin method, the trial functions are chosen to satisfy
the boundary conditions of the differential equation, and the test
functions are chosen to satisfy the dual boundary conditions.\s \h
In this paper we are concerned with the direct solution techniques
for third- and fifth-order elliptic equations, using the generalized
Jacobi Petrov-Galerkin method (GJPGM). Our algorithms lead to
discrete linear systems with specially structured
 matrices that can be efficiently inverted.\s
\h We organize the materials of this paper as follows. In Section 2,
we give some
 properties of classical and generalized Jacobi polynomials. In
 Sections 3 and 4, we are interested in using GJPGM to solve third- and
fifth-order linear differential equation with constant coefficients
subject to homogenous boundary conditions. In Section 5, we study
the structure of the coefficient matrices in the systems resulted
from applying GJPGM. In Section 6, we are interested in using GJPGM
to solve third- and fifth-order linear differential equation with
constant coefficients subject to nonhomogeneous boundary conditions.
In Section 7, the condition numbers of the systems resulted from
applying GJPGM are discussed. In Section 8, we discuss some
numerical results. Some concluding remarks are given in Section 9.
  \section{Some  properties of Classical and generalized Jacobi polynomials}
   \subsection{Classical Jacobi polynomials}
 \h The classical Jacobi polynomials associated with the real parameters
$(\alpha >-1,\,\beta >-1)$ (see, \cite{Abramowich and Stegun1970},
\cite{Andrews et al} and \cite{Sze1985}),
 are a sequence of polynomials $%
P_{n}^{(\alpha ,\beta )}(x), x\in(-1,1) (n=0,1,2,...)$, each
respectively of degree $n$. For our present purposes, it is more
convenient to introduce the normalized orthogonal polynomials
$R_{n}^{(\alpha ,\beta )}(x)=\dd\frac{P_{n}^{(\alpha ,\beta
)}(x)}{P_{n}^{(\alpha ,\beta )}(1)}.$ This means that
$R_{n}^{(\alpha ,\beta
)}(x)=\dd\frac{n!\,\Gamma(\al+1)}{\Gamma(n+\al+1)}\, P_{n}^{(\alpha
,\beta )}(x)$. In such case
$R_{n}^{(\alpha-\frac12,\alpha-\frac12)}(x)$ is identical to the
ultraspherical polynomials $C^{(\alpha)}_{n}(x)$, and the
polynomials $R_{n}^{(\alpha ,\beta )}(x)$ may be generated using the
recurrence relation
  \begin{eqnarray*}&2&(n+\lambda)(n+\alpha+1)(2n+\lambda-1)R_{n+1}^{(\alpha,\beta)}(x)=
 (2n+\lambda-1)_{3}\, x\, R_{n}^{(\alpha,\beta)}(x)
 \\&+&(\alpha^2-\beta^2)(2n+\lambda)
 R_{n}^{(\alpha,\beta)}(x)-2n(n+\beta)(2n+\lambda+1)
 R_{n-1}^{(\alpha,\beta)}(x),\quad n=1,2,\dots,\end{eqnarray*}
starting from $R_{0}^{(\alpha ,\beta )}(x)=1$ and $R_{1}^{(\alpha
,\beta )}(x)=\dd\frac{1}{2(\alpha+1)}[\alpha -\beta +(\lambda
+1)x]$, or obtained from Rodrigue's formula\\
$$ R_{n}^{(\alpha ,\beta )}(x)=\bigg(\frac{-1}{2}\bigg)^{n}\, \dd\frac{\Gamma(\alpha+1)}
 {\Gamma(n+\alpha+1)}(1-x)^{-\alpha }(1+x)^{-\beta
}D^{n}\bigg[(1-x)^{\alpha +n}(1+x)^{\beta +n}\bigg], $$
 where
\[\lambda =\alpha +\beta+1,\qq(a)_k=\frac{{\G(a+k)}}{\G{(a)}},\qq D=\frac{d}{dx},\]
and satisfy the orthogonality relation
 \bq
 \label{ortho}
\int_{-1}^{1}(1-x)^{\alpha}(1+x)^{\beta }\ R_{m}^{(\alpha
,\beta )}\ (x)R_{n}^{(\alpha ,\beta )}(x)\,dx =\left\{
  \begin{array}{ll}
  0, \ \qquad\   m \neq n, \s
    h^{\alpha,\beta}_{n}, \ \ \quad m=n,
  \end{array}\right.
  \eq
where
\[
 h^{\alpha,\beta}_{n}=\frac{2^{\lambda}\ n!\  \Gamma (n+\beta +1)\, \left[
 \Gamma(\alpha+1)\right]^2}
 {(2n+\lambda)\, \Gamma(n+\lambda)\, \Gamma
(n+\alpha +1)}.
\]
These polynomials are eigenfunctions of the following singular
Sturm-Liouville equation:\s \bq(1-x^{2})\,\phi^{\prime \prime
}(x)+[\,\beta -\alpha -(\lambda+1)\,x]\,\phi^{\prime }(x)
+n(n+\lambda)\,\phi(x)=0.\nonumber\eq
  The following relations will be of important use later.\s
 \bq
R_{k}^{(\alpha,\beta)}(x)=\dd\frac{1}{k+1}\left[(k+\alpha+1)\,
R^{(\alpha,\beta-1)}_{k+1}(x)
 -\alpha \, R^{(\alpha-1,\beta)}_{k+1}(x)\right],\label{shift1}
 \eq
 \bq
R_{k}^{(\alpha,\beta)}(x)=\dd\frac{1}{k+\alpha+\beta}\left[(k+\beta)\,
R^{(\alpha,\beta-1)}_{k}(x)
 +\alpha \, R^{(\alpha-1,\beta)}_{k}(x)\right],\label{shift2}
 \eq
 \bq
 (1-x)R_{k}^{(\alpha+1,\beta)}(x)=\dd\frac{2(\al+1)}{2k+\alpha+\beta+2}\left[R^{(\alpha,\beta)}_{k}(x)
 -R^{(\alpha,\beta)}_{k+1}(x)\right],\label{shift3}
 \eq
\bq
 \begin{split}
 \label{6}
& (1-x^2)\ R_{k-1}^{(\alpha+1,\beta+1)}(x)=\dd\frac{4(\alpha+1)}{(2k+\lambda-1)_{3}}\
 \left[(k+\beta)(2k+\lambda+1)R_{k-1}^{(\alpha,\beta)}(x)\right.\\
 &\left.-(k+\alpha+1)(2k+\lambda-1)
R_{k+1}^{(\alpha,\beta)}(x)+(\alpha-\beta)(2k+\lambda)\,
R_{k}^{(\alpha,\beta)}(x)\right],
 \end{split}
\eq
 \bq
 D^{q}R_{k}^{(\alpha,\beta)}(x)=\dd\frac{(k-q+1)_{q}\ (k+\la)_{q}}{2^{q}\, (\alpha+1)^q}\
R_{k-q}^{(\alpha+q,\beta+q)}(x).\label{dershift}
 \eq
Now, the following theorem is needed in the sequel.
 \begin{thm}
 \label{thm2}
 The qth derivative of the normalized Jacobi polynomial
 $R_n^{(\al,\bt)}(x)$ is given explicitly by
 \begin{equation}
 D^{q}R_{n}^{(\alpha ,\beta
)}(x)=(n+\lambda)_{q}\ 2^{-q}\, n! \
\sum_{i=0}^{n-q}C_{n-q,i}(\alpha +q,\beta +q,\alpha ,\beta )\
R_{i}^{(\alpha ,\beta )}(x),
 \nonumber\end{equation}
where \s
 $C_{n-q,i}(\alpha +q, \beta +q,\alpha ,\beta
)=\dd\frac{(n+q+\lambda)_{i} \ (i+q+\alpha +1)_{n-i-q}\ \Gamma
(i+\lambda)}{(n-i-q)!\ \Gamma (2i+\lambda)\, i!\,
(i+\alpha+1)_{n-i}}$\s
 \hspace*{140pt}$
 \times _{3}F_{2}\left(
\begin{array}{lll}
-n+q+i,\quad n+i+q+\lambda,\ \ i+\alpha
+1\\\hspace*{200pt};1\\i+q+\alpha +1,\quad 2i+\lambda +1
\end{array}
\right) .$
 \end{thm}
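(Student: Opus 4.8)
The plan is to reduce the statement to a connection problem between Jacobi families with shifted parameters and then to evaluate the resulting connection coefficients by a standard Rodrigues-plus-orthogonality argument. First I would invoke relation (\ref{dershift}), which already expresses $D^{q}R_{n}^{(\al,\bt)}(x)$ as a constant multiple of $R_{n-q}^{(\al+q,\bt+q)}(x)$; using $(n-q+1)_{q}=n!/(n-q)!$ this reads $D^{q}R_{n}^{(\al,\bt)}(x)=\dd\frac{n!\,(n+\la)_{q}}{(n-q)!\,2^{q}\,(\al+1)^{q}}\,R_{n-q}^{(\al+q,\bt+q)}(x)$. Hence it suffices to prove the connection formula $R_{n-q}^{(\al+q,\bt+q)}(x)=(n-q)!\,(\al+1)^{q}\sum_{i=0}^{n-q}C_{n-q,i}(\al+q,\bt+q,\al,\bt)\,R_{i}^{(\al,\bt)}(x)$, after which the theorem follows by substitution.

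To obtain the connection coefficients, write $R_{n-q}^{(\al+q,\bt+q)}(x)=\sum_{i=0}^{n-q}c_{i}\,R_{i}^{(\al,\bt)}(x)$, the expansion terminating at $i=n-q$ for degree reasons. Multiplying by $(1-x)^{\al}(1+x)^{\bt}R_{i}^{(\al,\bt)}(x)$, integrating over $[-1,1]$ and applying the orthogonality relation (\ref{ortho}) isolates $c_{i}=\dd\frac{1}{h_{i}^{\al,\bt}}\int_{-1}^{1}(1-x)^{\al}(1+x)^{\bt}\,R_{n-q}^{(\al+q,\bt+q)}(x)\,R_{i}^{(\al,\bt)}(x)\,dx$. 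Next I would substitute Rodrigues' formula for $R_{i}^{(\al,\bt)}(x)$ and integrate by parts $i$ times; the boundary contributions vanish at each stage because of the factors $(1-x)^{\al+i}(1+x)^{\bt+i}$ (recall $\al,\bt>-1$, so $\al+i,\bt+i>-1$). This leaves an integral of $D^{i}R_{n-q}^{(\al+q,\bt+q)}(x)$ against the weight $(1-x)^{\al+i}(1+x)^{\bt+i}$, and a second application of (\ref{dershift}) replaces $D^{i}R_{n-q}^{(\al+q,\bt+q)}(x)$ by a constant times $R_{n-q-i}^{(\al+q+i,\bt+q+i)}(x)$.

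At this point the computation reduces to the elementary moment-type integral $\int_{-1}^{1}(1-x)^{\al+i}(1+x)^{\bt+i}\,R_{n-q-i}^{(\al+q+i,\bt+q+i)}(x)\,dx$. I would expand $R_{n-q-i}^{(\al+q+i,\bt+q+i)}$ via its terminating Gauss hypergeometric representation in powers of $(1-x)/2$, interchange the finite sum with the integral, and evaluate each term through the Beta integral $\int_{-1}^{1}(1-x)^{A}(1+x)^{B}\,dx=2^{A+B+1}\G(A+1)\G(B+1)/\G(A+B+2)$. Rewriting the resulting Gamma quotients as Pochhammer symbols collapses the sum into a ${}_{3}F_{2}(1)$ with exactly the numerator and denominator parameters displayed in the theorem, and gathering all the prefactors (the Rodrigues constant, the two derivative constants, $h_{i}^{\al,\bt}$, and the $P\!\to\!R$ normalization) yields the closed form for $C_{n-q,i}(\al+q,\bt+q,\al,\bt)$.

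The conceptual content is light; the genuine work, and the step most likely to hide a sign or index slip, is the last one: carrying out the Pochhammer bookkeeping so that the pile of Gamma factors coming from the Rodrigues normalization, the two derivative formulas, the norm $h_{i}^{\al,\bt}$, and the Beta integrals condenses precisely to the stated coefficient together with the correct ${}_{3}F_{2}$ argument, parameters, and upper summation index. A convenient consistency check is to inspect the leading term $i=n-q$, where the ${}_{3}F_{2}$ reduces to $1$ and the coefficient must reproduce the known leading coefficient of $D^{q}R_{n}^{(\al,\bt)}$, and, independently, to specialize $q=1$.
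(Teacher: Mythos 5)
Your proposal is correct, and it is worth noting that the paper itself does not actually prove Theorem \ref{thm2}: it only refers the reader to Doha \cite{Doha20021}, so your argument is a self-contained derivation rather than a variant of anything in the text. The route you take --- reduce via (\ref{dershift}) to the connection problem $R_{n-q}^{(\al+q,\bt+q)}=\sum_{i}c_{i}\,R_{i}^{(\al,\bt)}$, extract $c_{i}$ by orthogonality, insert Rodrigues' formula for $R_{i}^{(\al,\bt)}$, integrate by parts $i$ times, apply (\ref{dershift}) once more, and evaluate the remaining moment by the Beta integral --- is the standard derivation of Jacobi connection coefficients, and it does close: with $m=n-q-i$, $\gamma=\al+q+i$, $A=\al+i$, $B=\bt+i$, the terminating hypergeometric expansion produces exactly the numerator parameters $-(n-q-i)$, $n+i+q+\la$, $i+\al+1$ and denominator parameters $i+q+\al+1$, $2i+\la+1$ of the stated ${}_{3}F_{2}(1)$, and the accumulated Gamma factors collapse to the stated coefficient once one uses $(n-q-i+1)_{i}=(n-q)!/(n-q-i)!$ and the identity $\dd\frac{(\al+1)_{i}}{(\al+1)_{q}\,(\al+q+1)_{i}}=\frac{(i+q+\al+1)_{n-i-q}}{(i+\al+1)_{n-i}}$. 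Two small cautions. First, the factor $(\al+1)^{q}$ printed in (\ref{dershift}) must be read as the Pochhammer symbol $(\al+1)_{q}$ (as the $q=2$ case already shows); if taken literally as a power, your reduction and the theorem's constant would disagree by $(\al+1)_{q}/(\al+1)^{q}$, so state this explicitly. Second, the vanishing of the boundary terms in the $i$-fold integration by parts uses $\al,\bt>-1$, exactly as you note, so the argument proves the theorem for classical indexes only --- which is all that is claimed. Your suggested consistency checks (the $i=n-q$ term and the $q=1$ specialization) are sound and worth actually carrying out, since the only real risk in this proof is the Pochhammer bookkeeping.
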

 \noindent
 (For the proof of Theorem \ref{thm2}, see Doha \cite{Doha20021}).\s
 \subsection{Generalized Jacobi polynomials}
\h  Following \cite{Guo2006}, we can define
  a family of generalized Jacobi polynomials/functions with indexes $\alpha$, $\beta\in
  \mathbb{R}$.\s
\h Let $w^{\alpha,\beta}(x)=(1-x)^{\alpha}(1+x)^\beta.$  We denote
by $L^{2}_{w^{\alpha,\beta}}(-1,1)$ the weighted $L^{2}$\  space
with inner product:
$$(u,v)_{w^{\alpha,\beta}}(x):=\int_{I}u(x)v(x)
w^{\alpha,\beta}(x)\, dx,$$
 and the associated norm $\|u\|_{w^{\alpha,\beta}}=(u,u)^{\frac12}_{w^{\alpha,\beta}}$.
We are interested in defining Jacobi polynomials with indexes
$\alpha$ and/or $\beta\le -1$, referred hereafter as generalized
Jacobi polynomials (GJPs), in such a way that they satisfy some
selected properties that are essentially relevant to spectral
approximations. In this work, we shall restrict our attention to the
cases when $\alpha$ and $\beta$ are negative integers.\s
 \h  Let $\ell,m\in \mathbb{Z}$
(the set of all integers),
\begin{equation}
J_{k}^{(\ell,m)}(x)=
  \begin{cases}
  (1-x)^{-\ell}\, (1+x)^{-m}\, R^{(-\ell,-m)}_{k-k_{0}}(x),\quad &k_{0}=-(\ell+m), \  \ell,m\le -1, \\
  (1-x)^{-\ell}\, R^{(-\ell,m)}_{k-k_{0}}(x),\quad &k_{0}=-\ell,\  \ell\le -1,\, m>-1, \\
   (1+x)^{-m}\, R^{(\ell,-m)}_{k-k_{0}}(x),\quad &k_{0}=-m ,\  \ell> -1,\, m\leq-1, \\
    R^{(\ell,m)}_{k-k_{0}}(x), \quad &k_{0}=0,\  \ell,m>-1. \\
  \end{cases}
\nonumber\end{equation}
 \h An important property of the GJPs is that for \ $\ell ,m,\in
\mathbb{Z}$\ and\ $\ell,m\ge 1$,\\
  $$D^{i}J^{(-\ell,-m)}_{k}(1)=0,\quad i=0,1,\ldots,\ell-1;$$
  $$D^{j}J^{(-\ell,-m)}_{k}(-1)=0,\quad j=0,1,\ldots,m-1.$$
It is not difficult to verify that \bq
J_{k}^{(-2,-1)}(x)=\frac{4}{(k-1)(2k-3)}\,
\left[L_{k-3}(x)-\frac{2k-3}{2k-1}\, L_{k-2}(x)-L_{k-1}(x)+
\frac{2k-3}{2k-1}\, L_{k}(x)\right],\nonumber\eq\bq\hspace*{200pt}
k\ge3,\nonumber\eq \bq J_{k}^{(-1,-2)}(x)=\frac{2}{2k-3}\,
\left[L_{k-3}(x)+\frac{2k-3}{2k-1}\, L_{k-2}(x)-L_{k-1}(x)-
\frac{2k-3}{2k-1}\, L_{k}(x)\right],\nonumber\eq\bq\hspace*{200pt}
k\ge 3,\nonumber\eq \bq
J_{k}^{(-3,-2)}(x)=\frac{24}{(2k-5)(2k-7)(k-2)}\left[L_{k-5}(x)-\frac{(2k-7)}{2k-3}\,
 L_{k-4}(x)
 -\frac{2(2k-5)}{2k-3}\, L_{k-3}(x)\right.\nonumber\eq
\bq\hspace*{15pt}\left.\qquad\quad+\dd\frac{2(2k-7)}{2k-1}\,
L_{k-2}(x)+\dd\frac{2k-7}{2k-3}\, L_{k-1}(x)-
 \dd\frac{(2k-5)(2k-7)}{(2k-1)(2k-3)}\, L_{k}(x)\right],\ k\ge
 5,\nonumber\eq
\bq
J_{k}^{(-2,-3)}(x)=\frac{8}{(2k-5)(2k-7)}\left[L_{k-5}(x)+\frac{2k-7}{2k-3}\,
 L_{k-4}(x)
 -\frac{2(2k-5)}{2k-3}\, L_{k-3}(x)\right.\nonumber\eq
\bq \left.\qquad\quad-\dd\frac{2(2k-7)}{2k-1}\,
L_{k-2}(x)+\dd\frac{2k-7}{2k-3}\, L_{k-1}(x)+
 \dd\frac{(2k-5)(2k-7)}{(2k-1)(2k-3)}\, L_{k}(x)\right],\  k\ge 5,\nonumber\eq
where\ $L_{k}(x)$\ is the Legendre polynomial of the {\it k}th
degree. $\{J_{k}^{(-\ell,-m)}(x)\}$\ are natural
candidates as basis
  functions for PDFs with the following boundary conditions:
  $$D^{i}u(1)=a_{i},\quad i=0,1,\ldots,\ell-1;$$
  $$D^{j} u(-1)=b_{j},\quad j=0,1,\ldots,m-1.$$

\section{ Dual Petrov-Galerkin algorithms for third-order elliptic linear differential equations}
\h
 We are interested in using the generalized Jacobi-Petrov-Galerkin method
 to solve the following third-order elliptic linear differential equation
 \bq\,u^{(3)}(x)\,-\al_1\, u^{(2)}(x)-\bt_1\, u^{(1)}(x)+\g_1\, u(x)=f(x),\qquad x\in (-1,1),\label{900}
 \eq
  subject to the homogeneous boundary conditions
 \begin{equation}\label{1000}
u(\pm 1)=u^{(1)}(1)=0.
\end{equation}
We define the space
$$V=\{{u\in H^{(2)}(I):u(\pm
1)=u^{(1)}(1)=0}\},$$\,and its dual space
$$
V^*=\{{u\in H^{(2)}(I):u(\pm 1)=u^{(1)}(-1)=0}\}.$$ where \bq
H^{(2)}(I)=\{u:\|u\|_{2,w^{\al,\bt}}<\infty\},\,
\|u\|_{2,w^{\al,\bt}}=\left(\dd\sum_{k=0}^2\|\partial_x^ku\|^2_{w^{\al+k,\bt+k}}\right)^{\frac12}\nonumber\eq
 Let\ $P_{N}$\ be the space of all polynomials of degree less than
 or  equal to $N$. Setting\ $V_{N}=V\cap P_{N}$\ and\ $V_{N}^*=V^*\cap
P_{N}$. We observe that:
$$
V_{N}={\mathrm {span}}\{J^{(-2,-1)}_{3}(x),\,
J^{(-2,-1)}_{4}(x),...,J^{(-2,-1)}_{N}(x)\},
 $$
$$ V^*_{N}
=\mathrm{
span}\{J^{(-1,-2)}_{3}(x),J^{(-1,-2)}_{4}(x),...,J^{(-1,-2)}_{N}(x)\}.
$$
The dual Petrov-Galerkin approximation of (\ref{900})-(\ref{1000})
is to find $u_{N}\in V_{N}$\ such that
\bq
\begin{split}
\left(D^{3}u_{N}(x),v(x)\right)-\al_1\left(D^{2}u_{N}(x),v(x)\right)
-\bt_1\left(Du_{N}(x),v(x)\right)\\
+\g_1\left(u_{N}(x),v(x)\right)=(f(x),v(x)),\q \forall v\in
V^{*}_N.
\end{split}
\eq

 {\subsection{The choice of basis functions}}
\h   We can construct suitable basis functions and their dual basis
by setting \bq \phi_{k}(x)=J_{k+3}^{(-2,-1)}(x)=(1-x^2)(1-x)\,
R_{k}^{(2,1)}(x),\quad k=0,1,\ldots,N-3,
  \nonumber\eq
 \bq
 \psi_{k}(x)=J_{k+3}^{(-1,-2)}(x)=(1-x^2)(1+x)\,
R_{k}^{(1,2)}(x),\quad k=0,1,\ldots,N-3.
  \nonumber\eq\h It is obvious that $\{\phi_{k}(x)\}$\ and\ $\{\psi_{k}(x)\}$\ are
linearly independent. Therefore we have
 $$V_{N}={\mathrm {span}}\{\phi_{k}(x):k=0,1,2,\ldots,N-3\},$$
 and
 $$V_{N}^{*}={\mathrm {span}}\{\psi_{k}(x):k=0,1,2,\ldots,N-3\}.$$
Now we state and prove the following two lemmas.
\begin{lem}
 \label{dersy}
 \begin{equation}
D^{3}J_{k+3}^{(-2,-1)}(x)=\dd2(k+1)(k+3)
R_{k}^{(1,2)}(x).\label{diff3}
 \end{equation}
 \end{lem}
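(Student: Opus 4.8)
The plan is to differentiate the factored form of $J_{k+3}^{(-2,-1)}$ three times, peeling off one power of each weight at a time. Recall from the definition of the GJPs (already noted above) that $J_{k+3}^{(-2,-1)}(x)=(1-x)^2(1+x)\,R_k^{(2,1)}(x)$. The tool I need, a companion of (\ref{dershift}), is the weighted differentiation formula
\begin{equation}
D^q\Big[(1-x)^{\alpha+q}(1+x)^{\beta+q}\,R_n^{(\alpha+q,\beta+q)}(x)\Big]
=(-1)^q\,2^q\,(\alpha+1)_q\,(1-x)^{\alpha}(1+x)^{\beta}\,R_{n+q}^{(\alpha,\beta)}(x).
\end{equation}
This follows immediately from Rodrigue's formula: expanding the left-hand side by Rodrigue's formula for $R_n^{(\alpha+q,\beta+q)}$ turns it into a constant multiple of $D^{n+q}\big[(1-x)^{\alpha+n+q}(1+x)^{\beta+n+q}\big]$, and Rodrigue's formula for $R_{n+q}^{(\alpha,\beta)}$ recognises this expression as the right-hand side; the constant $(-1)^q2^q(\alpha+1)_q$ drops out of the $\Gamma$-factors.

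First I would apply this with $q=2$, $\alpha=0$, $\beta=-1$, $n=k$; since $(1)_2=2$ we get
\begin{equation}
D^2\big[(1-x)^2(1+x)R_k^{(2,1)}(x)\big]=8\,(1+x)^{-1}\,R_{k+2}^{(0,-1)}(x).
\end{equation}
Here $R_{k+2}^{(0,-1)}$ is a genuine polynomial of degree $k+2$ (the normalising factor $\Gamma(\alpha+1)=\Gamma(1)$ is finite) that is divisible by $1+x$; indeed the standard reduction of a Jacobi polynomial with one negative-integer index gives $R_{k+2}^{(0,-1)}(x)=\tfrac12(1+x)\,R_{k+1}^{(0,1)}(x)$ (see \cite{Sze1985}), which can also be verified directly from Rodrigue's formula. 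Hence the second derivative collapses to $4\,R_{k+1}^{(0,1)}(x)$.

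Finally I would differentiate once more by means of (\ref{dershift}) with $q=1$ and $(\alpha,\beta)=(0,1)$ (so $\lambda=2$), which yields $D R_{k+1}^{(0,1)}(x)=\tfrac12(k+1)(k+3)\,R_k^{(1,2)}(x)$, and therefore
\begin{equation}
D^3 J_{k+3}^{(-2,-1)}(x)=4\cdot\tfrac12(k+1)(k+3)\,R_k^{(1,2)}(x)=2(k+1)(k+3)\,R_k^{(1,2)}(x),
\end{equation}
which is the assertion. The one point that needs care is the bookkeeping at the borderline parameter $\beta=-1$: one must keep in mind that $R_{k+2}^{(0,-1)}$ is the honest polynomial furnished by Rodrigue's formula (equivalently, $\tfrac12(1+x)R_{k+1}^{(0,1)}$), so that extracting the factor $(1+x)^{-1}$ is legitimate and no spurious singularity enters; everything else is routine Pochhammer algebra. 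One could instead reduce $(1-x^2)(1-x)R_k^{(2,1)}$ to a Legendre combination via (\ref{shift3}) and (\ref{6}) and then differentiate termwise using (\ref{dershift}), but that route produces symmetric-index polynomials $R_j^{(q,q)}$ and needs a further conversion to reach $R_k^{(1,2)}$, so the weighted-derivative argument above is the shorter one.
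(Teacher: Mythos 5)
Your proof is correct, but it follows a genuinely different route from the paper's. The paper expands $D^{3}\bigl[(1-x^{2})(1-x)R_{k}^{(2,1)}(x)\bigr]$ by Leibnitz's rule, uses the Sturm--Liouville differential equation to eliminate the third and second derivatives of $R_{k}^{(2,1)}$, and then works back to the $(1,2)$-basis through the shift relations (\ref{shift3}), (\ref{dershift}), (\ref{shift1}) and (\ref{shift2}). You instead invoke the Rodrigues-based ``transfer'' identity $D^{q}\bigl[(1-x)^{\alpha+q}(1+x)^{\beta+q}R_{n}^{(\alpha+q,\beta+q)}\bigr]=(-1)^{q}2^{q}(\alpha+1)_{q}(1-x)^{\alpha}(1+x)^{\beta}R_{n+q}^{(\alpha,\beta)}$, which is the natural dual of (\ref{dershift}) (it lowers the indexes and raises the degree); I checked the constant and it is right, as is your application with $q=2$, $\alpha=0$, $\beta=-1$ giving $D^{2}J_{k+3}^{(-2,-1)}=8(1+x)^{-1}R_{k+2}^{(0,-1)}$. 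The borderline case $\beta=-1$ is handled correctly: the reduction $R_{k+2}^{(0,-1)}(x)=\tfrac12(1+x)R_{k+1}^{(0,1)}(x)$ is the standard negative-index factorization (Szeg\H{o}, (4.22.2)), valid for the Rodrigues-defined polynomial by continuity in the parameters, so the apparent singularity cancels and $D^{2}J_{k+3}^{(-2,-1)}=4R_{k+1}^{(0,1)}$; one final use of (\ref{dershift}) then yields $2(k+1)(k+3)R_{k}^{(1,2)}$, and the result agrees with direct checks at $k=0,1$. Your argument is shorter and more structural than the paper's (two clean identities instead of Leibnitz plus the ODE plus four shift relations), and it produces the compact intermediate form $4R_{k+1}^{(0,1)}$ as a bonus, which upon conversion to the $(1,2)$-basis recovers the first formula of Lemma \ref{D21}; the price is that it relies on a weighted differentiation formula and a negative-index reduction that the paper has not stated, whereas the paper's proof stays entirely within its declared toolkit.
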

 \noindent
 \begin{proof}
 By using Leibnitz's rule, we have
 \begin{eqnarray}
D^{3}J_{k+3}^{(-2,-1)}(x)=&&(1-x^{2})(1-x)D^{3}R_{k}^{(2,1)}(x)
 +3(3x^{2}-2x-1)D^{2}R_{k}^{(2,1)}(x)\nonumber\\&&+6(-1+3x)DR_{k}^{(2,1)}(x)+6R_{k}^{(2,1)}(x).\label{26}\nonumber
 \end{eqnarray}
Making use of the relation
\bq\label{Dp3}(1-x^{2})(1-x)D^{3}R_{k}^{(2,1)}(x)=(1+6x-7x^{2})D^{2}R_{k}^{(2,1)}(x)
+(k-1)(k+5)(x-1)DR_{k}^{(2,1)}(x),\nonumber\eq we obtain \bq
D^{3}J_{k+3}^{(-2,-1)}(x)=2(x^2-1)D^2R_{k}^{(2,1)}(x)+\big[(k-1)(k+5)(x-1)+6(3x-1)\big]
DR_{k}^{(2,1)}(x)+6R_{k}^{(2,1)}(x),\nonumber\eq which in turn with
equation (2), and after some manipulation, yields
 \bq
D^{3}J_{k+3}^{(-2,-1)}(x)=-(k+1)(k+3)\big[(1-x)DR_{k}^{(2,1)}(x)-2R_{k}^{(2,1)}(x)\big].
\nonumber\eq Making use of the two relations (\ref{shift3}) and
(\ref{dershift}), we have \bq
D^{3}J_{k+3}^{(-2,-1)}(x)=\frac16(k+1)(k+3)\big[k(k+4)(x-1)R_{k-1}^{(3,2)}(x)+12R_{k}^{(2,1)}(x)\big].\nonumber\eq
Finally, with the aid of the two relations (\ref{shift1}) and
(\ref{shift2}), and after some manipulation, we get \bq
D^{3}J_{k+3}^{(-2,-1)}(x)=\dd2(k+1)(k+3)
R_{k}^{(1,2)}(x).\nonumber\eq
\end{proof}
\begin{lem}
\label{D21}
\begin{eqnarray}
\qquad
D^{2}J_{k+3}^{(-2,-1)}(x)&=&\frac{2(k+3)_{2}}{(2k+5)}R_{k+1}^{(1,2)}(x)-
\frac{(k+1)(k+3)}{(k+\frac{3}{2})_{2}}R_{k}^{(1,2)}(x)\nonumber\\&
&-
\frac{2(k)_{2}}{(2k+3)}R_{k-1}^{(1,2)}(x),\label{diff2}\nonumber\\
\qquad
DJ_{k+3}^{(-2,-1)}(x)&=&\frac{(k+3)_{3}}{2(k+2)(k+\frac{5}{2})_{2}}R_{k+2}^{(1,2)}(x)-
\frac{(k+3)_{2}}{(k+\frac{3}{2})_{3}}R_{k+1}^{(1,2)}(x)-
\frac{(k+1)(k+3)}{(k+\frac{3}{2})_{2}}R_{k}^{(1,2)}(x)\nonumber\\&
&+ \frac{(k)_{2}}{(k+\frac{1}{2})_{3}}R_{k-1}^{(1,2)}(x)+
\frac{(k-1)_{3}}{2(k+2)(k+\frac{1}{2})_{2}}R_{k-2}^{(1,2)}(x),\label{diff1}\nonumber\\
 J_{k+3}^{(-2,-1)}(x)
&=&\frac{(k+4)_{3}}{4(k+2)(k+\frac{5}{2})_{3}}R_{k+3}^{(1,2)}(x)-
\frac{3(k+3)_{3}}{4(k+2)(k+\frac{3}{2})_{4}}R_{k+2}^{(1,2)}(x)-
\frac{3(k+3)_{2}}{4(k+\frac{3}{2})_{3}}R_{k+1}^{(1,2)}(x)\nonumber\\&
&+ \frac{3(k+1)(k+3)}{2(k+\frac{1}{2})_{4}}R_{k}^{(1,2)}(x)+
\frac{3(k)_{2}}{4(k+\frac{1}{2})_{3}}R_{k-1}^{(1,2)}(x)\nonumber\\&
&- \frac{3(k-1)_{3}}{4(k+2)(k-\frac{1}{2})_{4}}R_{k-2}^{(1,2)}(x)-
\frac{(k-2)_{3}}{4(k+2)(k-\frac{1}{2})_{3}}R_{k-3}^{(1,2)}(x).\label{diff0}\nonumber\end{eqnarray}
 \end{lem}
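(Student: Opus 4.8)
The plan is to obtain the three identities as a descending chain anchored at Lemma~\ref{dersy}. Since $R_{i}^{(1,2)}$ has exact degree $i$ and two polynomials with the same derivative and the same value at one point coincide, it suffices to show: (a) differentiating the claimed expansion of $J_{k+3}^{(-2,-1)}$ reproduces the claimed expansion of $DJ_{k+3}^{(-2,-1)}$; (b) differentiating the claimed expansion of $DJ_{k+3}^{(-2,-1)}$ reproduces the claimed expansion of $D^{2}J_{k+3}^{(-2,-1)}$; (c) differentiating the claimed expansion of $D^{2}J_{k+3}^{(-2,-1)}$ returns $2(k+1)(k+3)R_{k}^{(1,2)}(x)$, i.e. Lemma~\ref{dersy}, equation (\ref{diff3}); and (d) each claimed right-hand side has the correct value at $x=1$.

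Each of the differentiations in (a)--(c) is carried out with (\ref{dershift}): with $q=1$ and parameters $(1,2)$ it sends $R_{j}^{(1,2)}\mapsto\frac{j(j+4)}{4}\,R_{j-1}^{(2,3)}$, so after differentiating one lands in the shifted family $\{R_{i}^{(2,3)}\}$. To compare with the target (which is written in $\{R_{i}^{(1,2)}\}$) one re-expands each $R_{i}^{(1,2)}$ in terms of $R_{i}^{(2,3)},R_{i-1}^{(2,3)},R_{i-2}^{(2,3)}$; this short connection is assembled from the one-parameter shift relations (\ref{shift1})--(\ref{shift2}) (or, equivalently, read off from (\ref{6})). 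Matching coefficients then reduces (a)--(c) to finitely many polynomial identities in $k$ among Pochhammer symbols. For (d), the normalization $R_{i}^{(1,2)}(1)=1$ makes each right-hand side at $x=1$ equal to the sum of its coefficients, which must match $D^{s}J_{k+3}^{(-2,-1)}(1)$; and from $J_{k+3}^{(-2,-1)}(x)=(1-x)^{2}(1+x)R_{k}^{(2,1)}(x)$ and Leibnitz's rule at $x=1$ one gets these boundary values to be $0$, $0$ and $4$ for $s=0,1,2$. (Alternatively, one may mimic the proof of Lemma~\ref{dersy} verbatim: apply Leibnitz's rule to $(1-x^{2})(1-x)R_{k}^{(2,1)}(x)$ and to its first two derivatives, reduce the top-order terms by multiplying the Sturm--Liouville equation for $R_{k}^{(2,1)}$ by $(1-x)$, and finish with (\ref{shift1})--(\ref{6}) and (\ref{dershift}); this gives the same answers.)

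The reduction steps themselves are routine; the real work is the bookkeeping. The main obstacle is to organise the passage between the Jacobi families so that the intermediate expressions stay short: a priori a degree-$i$ polynomial expands against all of $R_{0}^{(1,2)},\dots,R_{i}^{(1,2)}$, and it is only because the two index pairs differ by one in each slot that the relevant connection coefficients are banded with bandwidth two, which is exactly what forces the final formulas to have precisely three, five and seven terms. Once that banded structure is made explicit, checking that the resulting rational-in-$k$ coefficients collapse to the stated closed forms — and that the coefficient sums are $0$, $0$ and $4$ — is a finite, if lengthy, verification.
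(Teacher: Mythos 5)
Your plan is sound and, as far as a plan can be, correct; but it is a genuinely different route from the paper's. The paper gives no details at all for Lemma \ref{D21}: it simply asserts that the proof "can be accomplished by following the same procedure used in the proof of Lemma \ref{dersy}", i.e.\ a forward derivation for each of the three identities separately — apply Leibnitz's rule to $(1-x^2)(1-x)R_k^{(2,1)}(x)$ (or to its first or second derivative), reduce the top-order derivative terms via the Sturm--Liouville equation, and then convert with (\ref{shift1})--(\ref{6}) and (\ref{dershift}). You instead propose a backward verification anchored at the already-proved formula (\ref{diff3}): show that $D$ maps each claimed expansion to the next one up the chain (working in the shifted family $R^{(2,3)}$ via (\ref{dershift}) and the banded connection $R_i^{(1,2)}\mapsto R_i^{(2,3)},R_{i-1}^{(2,3)},R_{i-2}^{(2,3)}$, which is indeed the bounded direction of the contiguous relations), and pin down the integration constants by the values $D^sJ_{k+3}^{(-2,-1)}(1)=0,0,4$ for $s=0,1,2$ — values I checked, and which are consistent with the coefficient sums (e.g.\ the three coefficients in the $D^2$ row do sum to $4$). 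What your approach buys is a genuine \emph{verification} scheme: each step is a finite coefficient match against a known target, and an arithmetic slip is caught immediately by the boundary check, whereas the paper's forward derivation produces the formulas from scratch but offers no internal consistency check. What it costs is that it presupposes the answer and still leaves the same order of magnitude of Pochhammer bookkeeping unexecuted — so, like the paper, you have stated a correct strategy rather than carried out the computation. If you write this up, the one ingredient you should make explicit is the three-term connection formula expressing $R_i^{(1,2)}$ in the $R^{(2,3)}$ basis, since the paper's relations (\ref{shift1})--(\ref{shift2}) are stated in the index-lowering direction and must be composed/inverted to get it.
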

 \begin{proof}
The proof of Lemma \ref{D21} is rather lengthy and it can
be accomplished by following the same procedure used in the proof of
Lemma \ref{dersy}.
\end{proof}
 \h Now, based on the two Lemmas \ref{dersy} and \ref{D21}, the following
theorem can be obtained.
\begin{thm}\label{thm3}
 We have, for arbitrary constants\ $a_{k}$,
  \bq
  D^{3}\left[\dd\sum_{k=0}^{N-3}\ a_{k} J_{k+3}^{(-2,-1)}(x)\right]=\dd\sum_{k=0}^{N-3}b_{k}\
R_{k}^{(1,2)}(x),\label{D3J}
  \eq
  where
  \bq
  b_{k}=2(k+1)(k+3)a_{k}.\label{bk}
  \eq Moreover, if
\bq D^{2}\left[\dd\sum_{k=0}^{N-3}\ a_{k}
J_{k+3}^{(-2,-1)}(x)\right]=\dd\sum_{k=0}^{N-2}e_{k,2}
R_{k}^{(1,2)}(x),
  \label{35}\eq
  then
  \bq
e_{k,2}=a_{k-1}\,\al^{(2)}_{k-1}+a_k\,\bt^{(2)}_k+a_{k+1}\,\g^{(2)}_{k+1},\label{e2}\eq
where
\begin{alignat*}{3} &\al^{(2)}_k=\frac{2(k+3)_{2}}{(2k+5)},\quad
&&\bt^{(2)}_k=-\frac{(k+1)(k+3)}{(k+\frac{3}{2})_{2}},\quad
 &&\g^{(2)}_k=-\frac{2(k)_{2}}{(2k+3)}.\f
 \end{alignat*}
Also, if\bq D\left[\dd\sum_{k=0}^{N-3}\ a_{k}
J_{k+3}^{(-2,-1)}(x)\right]=\dd\sum_{k=0}^{N-1}e_{k,1}\
R_{k}^{(1,2)}(x),
  \label{37}\eq then \bq
e_{k,1}=a_{k-2}\,\al^{(1)}_{k-2}+a_{k-1}\,\bt^{(1)}_{k-1}+a_{k}\,\g^{(1)}_{k}+a_{k+1}\,
  \delta^{(1)}_{k+1}+a_{k+2}\,\mu^{(1)}_{k+2},\label{e1}\eq where
\begin{alignat*}{3} &\al^{(1)}_{k}=\frac{(k+3)_{3}}{2(k+2)(k+\frac{5}{2})_{2}},\quad
&&\bt^{(1)}_{k}=-\frac{(k+3)_{2}}{(k+\frac{3}{2})_{3}},\quad
 &&\g^{(1)}_{k}=-
\frac{(k+1)(k+3)}{(k+\frac{3}{2})_{2}},\f &\delta^{(1)}_{k}=
\frac{(k)_{2}}{(k+\frac{1}{2})_{3}},\quad &&\mu^{(1)}_{k}=
\frac{(k-1)_{3}}{2(k+2)(k+\frac{1}{2})_{2}}.\f
 \end{alignat*}
Finally, if\bq \dd\sum_{k=0}^{N-3}\ a_{k}
J_{k+3}^{(-2,-1)}(x)=\dd\sum_{k=0}^{N}e_{k,0}\
R_{k}^{(1,2)}(x),\label{39}\eq then \bq
e_{k,0}=a_{k-3}\,\al^{(0)}_{k-3}+a_{k-2}\,\bt^{(0)}_{k-2}+a_{k-1}\,\g^{(0)}_{k-1}+a_{k}\,
  \delta^{(0)}_{k}+a_{k+1}\,\mu^{(0)}_{k+1}+a_{k+2}\,\eta^{(0)}_{k+2}+a_{k+3}\,\zeta^{(0)}_{k+3},\label{sumJ}
  \eq
  where
\begin{alignat*}{3}&\al^{(0)}_{k}=\frac{(k+4)_{3}}{4(k+2)(k+\frac{5}{2})_{3}},\quad
&&\bt^{(0)}_{k}=-\frac{3(k+3)_{3}}{4(k+2)(k+\frac{3}{2})_{4}},\quad
 &&\g^{(0)}_{k}=-
\frac{3(k+3)_{2}}{4(k+\frac{3}{2})_{3}},\f &\delta^{(0)}_{k}=-
 \frac{3(k+1)(k+3)}{2(k+\frac{1}{2})_{4}},\quad
&&\mu^{(0)}_{k}=\frac{3(k)_{2}}{4(k+\frac{1}{2})_{3}},
 &&\eta^{(0)}_{k}=-
\frac{3(k-1)_{3}}{4(k+2)(k-\frac{1}{2})_{4}},\quad\f
&\zeta^{(0)}_{k}=-
\frac{(k-2)_{3}}{4(k+2)(k-\frac{1}{2})_{3}}.\f\end{alignat*}
\end{thm}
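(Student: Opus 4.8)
The plan is to derive the theorem directly from Lemmas \ref{dersy} and \ref{D21} using only linearity of the operators $D^{q}$ and a re-indexing of finite sums; no new analytic ingredient is needed, and the only real work is the careful treatment of the endpoints of each sum. I would dispose of the four assertions in order of the increasing number of terms on the right-hand side.

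For $D^{3}$: linearity gives $D^{3}\bigl[\sum_{k=0}^{N-3}a_{k}J_{k+3}^{(-2,-1)}(x)\bigr]=\sum_{k=0}^{N-3}a_{k}\,D^{3}J_{k+3}^{(-2,-1)}(x)$, and Lemma \ref{dersy} converts each summand into $2(k+1)(k+3)\,a_{k}\,R_{k}^{(1,2)}(x)$. No index shift occurs, so the range $0,\dots,N-3$ is preserved and one reads off $b_{k}=2(k+1)(k+3)a_{k}$; this is (\ref{D3J})--(\ref{bk}).

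For $D^{2}$: write the left-hand side of (\ref{35}) as $\sum_{j=0}^{N-3}a_{j}\,D^{2}J_{j+3}^{(-2,-1)}(x)$ and substitute the first identity of Lemma \ref{D21}, which, once its explicit Pochhammer fractions are matched with the symbols of the statement, reads $D^{2}J_{j+3}^{(-2,-1)}(x)=\al^{(2)}_{j}R_{j+1}^{(1,2)}(x)+\bt^{(2)}_{j}R_{j}^{(1,2)}(x)+\g^{(2)}_{j}R_{j-1}^{(1,2)}(x)$. Re-indexing by $k=j+1,\,j,\,j-1$ and collecting the coefficient of $R_{k}^{(1,2)}(x)$ gives $e_{k,2}=a_{k-1}\al^{(2)}_{k-1}+a_{k}\bt^{(2)}_{k}+a_{k+1}\g^{(2)}_{k+1}$, which is (\ref{e2}). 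The top index rises to $N-2$ because $D^{2}J_{N}^{(-2,-1)}$ has degree $N-2$ and so contributes an $R_{N-2}^{(1,2)}$ term, while the bottom index stays at $0$ because $\g^{(2)}_{0}$ vanishes, carrying the Pochhammer factor $(0)_{2}=0$; with the convention $a_{j}=0$ for $j\notin\{0,\dots,N-3\}$ the single formula for $e_{k,2}$ is then valid for every $k=0,\dots,N-2$.

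The cases $D^{1}$ and $D^{0}$ are handled in exactly the same way, starting from the second and third identities of Lemma \ref{D21}, whose right-hand sides have five and seven $R^{(1,2)}$-terms respectively; re-indexing and collecting the coefficient of $R_{k}^{(1,2)}(x)$ over the shifts $j=k-2,\dots,k+2$ (resp.\ $j=k-3,\dots,k+3$) yields the five-term relation (\ref{e1}) and the seven-term relation (\ref{sumJ}), with $\al^{(1)},\dots,\mu^{(1)}$ and $\al^{(0)},\dots,\ze^{(0)}$ being merely relabelled coefficients from Lemma \ref{D21}, and with the upper ranges $N-1$ and $N$ dictated by $\deg D^{1}J_{k+3}^{(-2,-1)}=k+2$ and $\deg J_{k+3}^{(-2,-1)}=k+3$. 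The one point deserving genuine attention --- the mild obstacle here --- is verifying, at both ends of each re-indexed sum, that the coefficients attached to out-of-range indices are exactly those annihilated either by the convention $a_{j}=0$ or by a vanishing Pochhammer factor (the coefficients $\delta^{(1)}_{0}$, $\mu^{(1)}_{0}$ and $\mu^{(1)}_{1}$, for instance, all vanish), so that each closed formula for $e_{k,q}$ holds uniformly without case distinctions. Granting this, the proof is complete, since it involves no computation beyond re-indexing.
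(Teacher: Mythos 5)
Your proposal is correct and follows exactly the route the paper intends: the paper offers no proof beyond the remark that the theorem follows from Lemmas \ref{dersy} and \ref{D21}, and your argument --- linearity, substitution of the lemmas' expansions, re-indexing, and the observation that out-of-range terms are killed either by vanishing Pochhammer factors or by the convention $a_{j}=0$ --- is precisely the intended derivation. Nothing further is needed.
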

 The application of Petrov-Galerkin
method to equation (\ref{900}), gives
  \bq
 \left(D^{3}\, u_{N}(x)-\al_1\,D^{2}u_{N}-\bt_1\,D u_{N}
 +\g_1\,u_{N},\psi_{k}(x)\right)
 =\left(f(x),\psi_{k}(x)\right),
\label{gal}
  \eq
  where
  $$\displaystyle  u_N(x)=\dd\sum_{k=0}^{N-3}a_k\ \phi_{k}(x),\quad \phi
_k(x)=J_{k+3}^{(-2,-1)}(x), \quad \psi
_k(x)=J_{k+3}^{(-1,-2)}(x),\quad k=0,1,\ldots ,N-3.$$
 Substitution of formulae (\ref{D3J}), (\ref{35}), (\ref{37}) and (\ref{39}) into (\ref{gal}) yields\s
\bq\bigg(\dd\sum_{j=0}^{N-3}b_{j}\,R_{j}^{(1,2)}(x)-\dd
\al_1\dd\sum_{j=0}^{N-2} e_{j,2}\,R_{j}^{(1,2)}(x)-\dd
\bt_1\dd\sum_{j=0}^{N-1}
e_{j,1}\,R_{j}^{(1,2)}(x)\nonumber\eq\bq+\dd \g_1\dd\sum_{j=0}^{N}
e_{j,0}\, R_{j}^{(1,2)}(x),J_{k+3}^{(-1,-2)}(x)\bigg)
 =\left(f,J_{k+3}^{(-1,-2)}(x)\right),\label{galerkin2}
 \eq
  where $b_k$ and $e_{k,2-q}, 0\leqslant q \leqslant 2$ are given by
  (\ref{bk}), (\ref{e2}), (\ref{e1}) and (\ref{sumJ}) respectively.\s
Eq. (\ref{galerkin2}) is equivalent
  to
  \bq\bigg(\dd\sum_{j=0}^{N-3}b_{j}\,R_{j}^{(1,2)}(x)-\dd
\al_1\dd\sum_{j=0}^{N-2} e_{j,2}\,R_{j}^{(1,2)}(x)-\dd
\bt_1\dd\sum_{j=0}^{N-1}
e_{j,1}\,R_{j}^{(1,2)}(x)\nonumber\eq\bq+\dd \g_1\dd\sum_{j=0}^{N}
e_{j,0}\, R_{j}^{(1,2)}(x),R_k^{(1,2)}(x)\bigg)_w
 =\left(f,R_k^{(1,2)}(x)\right)_w,
 \nonumber\eq where $w=(1-x^2)(1+x).$ Making use of the orthogonality
relation (\ref{ortho}), it is not difficult to show that Eq.
(\ref{galerkin2}) is equivalent to
\begin{equation}
f_{k}=\left(b_{k}-\dd \al_1\, e_{k,2}-\dd \bt_1\, e_{k,1}+\dd \g_1\,
e_{k,0}\right)\, h_{k};\ k=0,1,\ldots N-3.\label{system1}
\end{equation}
 where
 \bq
 f_{k}=\bigg(f,R_{k}^{(1,2)}(x)\bigg)_{w}.
 \nonumber\eq
This linear system may be put in the form \s
\begin{equation}
\left(b^1_{k}-\dd \al_1\, e_{k,2}-\dd \bt_1\, e_{k,1}+\dd \g_1\,
e_{k,0}\right)=f^{*}_k,\ k=0,1,\ldots N-3,\label{system}
\end{equation}
where\ \bq f^{*}_k=\dd\frac{f_{k}} {h_{k}^{1,2}},\quad
h_{k}^{1,2}=\dd\frac{8}{(k+1)(k+2)(k+3)}.
\label{righthand}\nonumber\eq
 which may be written in the matrix form
\begin{equation}\
(B_1+\dd \al_1\, E_{2}+\dd \bt_1\, E_{1}+\dd \g_1\, E_{0})\,
\mathbf{a}=\mathbf{f^*},\label{system1}
\end{equation}
where
\begin{center}
$\mathbf{a}$=$(a_0,a_1,\ldots ,a_{N-3})^T$,\
$\mathbf{f^{*}}$=$(f^{*}_0,f^{*}_1, \ldots ,f^{*}_{N-3})^T,$
\end{center}
and the nonzero elements of the matrices $B,E_{2},E_{1}$ and
$E_{0}$\ are given explicitly in the following theorem.
\begin{thm}\label{thm4} The nonzero elements\
$(b^1_{kj})$ and $(e^{i, 1}_{kj}),\ 0\le
 i\le 2\
{{\text for}} \ 0\leq k,j\leq N-3$ are \ given \ as \ follows:
\begin{align*}
&b^1_{kk}=2(k+1)(k+3),&&
 e^{2,1}_{k,k+1}=\dd\frac{2(k+1)(k+2)}{2k+5},\\
 & e^{2,1}_{k+1,k}=\dd\frac{-2(k+3)(k+4)}{2k+5},
&&e^{2,1}_{kk}=\frac{4\, (k+1)(k+3)}{(2 k+3)(2k+5)},\\
&e^{1,1}_{kk}=\frac{4\, (k+1)(k+3)}{(2 k+3)(2k+5)},
  &&\displaystyle
e^{1,1}_{k,k+1}=\frac{-8(k+1)(k+2)}{(2k+3)(2 k+5)(2 k+7)},\\
 &\displaystyle
e^{1,1}_{k,k+2}=\frac{-2(k+1)(k+2)(k+3)}{(k+4)(2k+5)(2 k+7)},
&&e^{1,1}_{k+1,k}=\frac{8(k+3)(k+4)}{(2k+3)(2k+5)(2 k+7)},\\
&e^{1,1}_{k+2,k}=\frac{-2(k+3)(k+4)(k+5)}{(k+2)(2k+5)(2 k+7)},
 &&e^{0,1}_{kk}=\dd\frac{3(k+1)(k+3)}{2(k+\frac12)_{4}},\\
 &e^{0,1}_{k,k+1}=\frac{3(k+1)_{2}}{4(k+\frac32)_{3}},
 &&e^{0,1}_{k,k+2}=\dd\frac{-3(k+1)_{3}}{4(k+4)(k+\frac32)_{4}},\\
 &e^{0,1}_{k,k+3}=\frac{-(k+1)_{3}}{4(k+5)(k+\frac52)_{3}},
 &&e^{0,1}_{k+1,k}=\dd\frac{-3(k+3)_{2}}{4(k+\frac32)_{3}},\\
 &e^{0,1}_{k+2,k}=\frac{-3(k+3)_{5}}{4k(k+\frac32)_{4}},
 &&e^{0,1}_{k+3,k}=\dd\frac{(k+4)_{3}}{4(k+2)(k+\frac52)_{3}}.
\end{align*}
\end{thm}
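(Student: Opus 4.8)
The plan is to obtain Theorem~\ref{thm4} as a direct corollary of Theorem~\ref{thm3}: no new identity is needed, only the identification of each matrix entry with an already-computed expansion coefficient, a relabelling of indices, and the reduction of the Pochhammer-symbol expressions to explicit rational functions of $k$. The starting observation is that, since the test functions satisfy $\psi_k(x)=J_{k+3}^{(-1,-2)}(x)=(1-x^2)(1+x)\,R_k^{(1,2)}(x)=w^{1,2}(x)\,R_k^{(1,2)}(x)$, testing against $\psi_k$ amounts to taking the weighted inner product $(\cdot,R_k^{(1,2)})_{w^{1,2}}$. Substituting the four expansions (\ref{D3J}), (\ref{35}), (\ref{37}), (\ref{39}) of Theorem~\ref{thm3} into the Petrov--Galerkin equation (\ref{gal}) and invoking the orthogonality relation (\ref{ortho}) isolates, for each $k$, the coefficient of $R_k^{(1,2)}$ in $D^3u_N$, $D^2u_N$, $Du_N$ and $u_N$; this is precisely the route to the scalar system (\ref{system}) and its matrix form (\ref{system1}). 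Hence $b^1_{kj}$ and the $e^{i,1}_{kj}$ are, up to an overall sign attached to each of the four blocks, nothing but the coefficients $2(k+1)(k+3)$ and $\al^{(i)},\bt^{(i)},\g^{(i)},\delta^{(i)},\mu^{(i)},\eta^{(i)},\zeta^{(i)}$ occurring in Theorem~\ref{thm3}.

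Concretely I would argue block by block. By (\ref{bk}), $D^3u_N=\sum_k 2(k+1)(k+3)\,a_k\,R_k^{(1,2)}$, so $B_1$ is diagonal with $b^1_{kk}=2(k+1)(k+3)$. By (\ref{e2}) the coefficient $e_{k,2}$ couples only $a_{k-1},a_k,a_{k+1}$, so $E_2$ is tridiagonal; by (\ref{e1}) $e_{k,1}$ couples $a_{k-2},\dots,a_{k+2}$, so $E_1$ is pentadiagonal; by (\ref{sumJ}) $e_{k,0}$ couples $a_{k-3},\dots,a_{k+3}$, so $E_0$ is heptadiagonal --- which simultaneously establishes the advertised banded structure of $B_1+\al_1E_2+\bt_1E_1+\g_1E_0$. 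Letting $\widetilde E_i$ denote the matrix whose $(k,j)$ entry is the coefficient of $a_j$ in $e_{k,i}$ as listed in Theorem~\ref{thm3}, comparison of the scalar identity (\ref{system}) with its matrix form (\ref{system1}) forces $E_2=-\widetilde E_2$, $E_1=-\widetilde E_1$, $E_0=\widetilde E_0$, the three signs tracking the coefficients $-\al_1$, $-\bt_1$, $+\g_1$ of the operator in (\ref{900}). Reading the entries of $\widetilde E_i$ off (\ref{e2}), (\ref{e1}), (\ref{sumJ}) --- with the lower index shifted so that the coefficient of $a_j$ in $e_{k,i}$ occupies row $k$, column $j$, i.e.\ $\al^{(i)}$ sits on the lowest sub-diagonal and $\zeta^{(0)}$ (resp.\ $\mu^{(1)}$, $\g^{(2)}$) on the highest super-diagonal --- then produces every entry listed in the statement once the sign rule above is applied.

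The remaining step is purely computational: rewrite each Pochhammer symbol in closed form using $(a)_n=\prod_{j=0}^{n-1}(a+j)$, so that in particular $(k+\tfrac32)_n=2^{-n}\prod_{j=0}^{n-1}(2k+3+2j)$, and similarly for $(k+\tfrac52)_n$, $(k+\tfrac12)_n$, $(k-\tfrac12)_n$. For instance $-\bt^{(2)}_k=\frac{(k+1)(k+3)}{(k+\frac32)_2}$ becomes $\frac{4(k+1)(k+3)}{(2k+3)(2k+5)}$, and the off-diagonal terms are obtained from the same coefficients after relabelling $k\mapsto k\pm1$, $k\pm2$, $k\pm3$ so that each is expressed through its own row index as in the statement. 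I do not expect a genuine obstacle here: all the substance lies in Lemmas~\ref{dersy} and~\ref{D21} and in Theorem~\ref{thm3}, and the only real care needed is bookkeeping --- keeping the three inherited signs consistent when passing from (\ref{system}) to (\ref{system1}), and making sure every shifted index lands in the correct row and column of the band.
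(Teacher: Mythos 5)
Your proposal is correct and is essentially the paper's own (implicit) route: Theorem \ref{thm4} is stated without a separate proof precisely because it is the read-off you describe --- substitute (\ref{D3J}), (\ref{35}), (\ref{37}) and (\ref{39}) into (\ref{gal}), reduce by the orthogonality relation (\ref{ortho}) to the scalar system (\ref{system}), and transcribe the coefficients of Theorem \ref{thm3} into $B_1,E_2,E_1,E_0$ with the signs $-\al_1,-\bt_1,+\g_1$ absorbed exactly as you state, followed by the Pochhammer simplifications $(k+\tfrac32)_n=2^{-n}\prod_{j=0}^{n-1}(2k+3+2j)$, etc. The only friction you will encounter is internal to the paper rather than to your argument: the printed $\delta^{(0)}_{k}$ in Theorem \ref{thm3} carries the opposite sign to the corresponding coefficient of $R_k^{(1,2)}$ in Lemma \ref{D21} and to the stated $e^{0,1}_{kk}$, and the printed $e^{0,1}_{k+2,k}$ does not agree with $\bt^{(0)}_{k}$; your bookkeeping, applied to the coefficients of Lemma \ref{D21}, yields the consistent values.
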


\section{Dual Petrov-Galerkin algorithms for fifth-order differential equations}
\h We are interested in using the generalized Jacobi-Petrov-Galerkin
method
 to solve the following fifth-order elliptic linear equation
 \bq -u^{(5)}(x)+\al_2\, u^{(4)}(x)+\bt_2\, u^{(3)}(x)-\g_2\, u^{(2)}(x)-\delta_2\, u^{(1)}(x)+\mu_2\,
  u(x)=f(x),\quad x\in (-1,1),\label{5}
 \eq
  subject to the homogeneous boundary conditions
 \begin{equation}\label{5hbc}
u(\pm 1)=u^{(1)}(\pm 1)=u^{(2)}(1)=0.
\end{equation}
We define the following two spaces
$$V=\{{u\in H^{(3)}(I):u(\pm 1)=u^{(1)}(\pm 1)=u^{(2)}(1)=0}\},$$
 and
$$
V^*=\{{u\in H^{(3)}(I):u(\pm 1)=u^{(1)}(\pm 1)=u^{(2)}(-1)=0}\}.$$
where \bq H^{(3)}(I)=\{u:\|u\|_{3,w^{\al,\bt}}<\infty\},\,
\|u\|_{3,w^{\al,\bt}}=\left(\dd\sum_{k=0}^3\|\partial_x^ku\|^2_{w^{\al+k,\bt+k}}\right)^{\frac12}\nonumber\eq
 Let\ $P_{N}$\ be the space of all polynomials of degree less than
 or  equal to $N$. Setting\ $V_{N}=V\cap P_{N}$\ and\ $V_{N}^*=V^*\cap
P_{N}$. We observe that:
$$
V_{N}={\mathrm {span}}\{J^{(-3,-2)}_{5}(x),\,
J^{(-3,-2)}_{6}(x),...,J^{(-3,-2)}_{N}(x)\},
 $$
$$ V^*_{N}=\mathrm{
span}\{J^{(-2,-3)}_{5}(x),J^{(-2,-3)}_{6}(x),...,J^{(-2,-3)}_{N}(x)\}.
$$
The dual Petrov-Galerkin approximation of (\ref{5})-(\ref{5hbc}) is
to find $u_{N}\in V_{N}$ such that
\bq
\begin{split}
&-\left(D^{5}u_{N}(x),v(x)\right)+\al_2\left(D^{4}u_{N}(x),v(x)\right)
+\bt_2\left(D^{3}u_{N}(x),v(x)\right)-\g_2\left(D^{2}u_{N}(x),v(x)\right)\s
 &\q-\delta_2\left(D\,
u_{N}(x),v(x)\right)+\mu_2\left(u_{N}(x),v(x)\right)=(f(x),v(x)),
 \quad\forall v\in V^{*}_N.
 \end{split}
 \eq

{\subsection{The choice of basis functions}} \h We can construct
suitable basis functions and their dual basis by setting \bq
\phi_{k}(x)=J_{k+5}^{(-3,-2)}(x)=(1-x^2)^{2}(1-x)\,
R_{k}^{(3,2)}(x),\quad k=0,1,\ldots,N-5,
  \nonumber\eq
 \bq
 \psi_{k}(x)=J_{k+5}^{(-2,-3)}(x)=(1-x^2)^{2}(1+x)\,
R_{k}^{(2,3)}(x),\quad k=0,1,\ldots,N-5.
  \nonumber\eq\h It is obvious that $\{\phi_{k}(x)\}$\ and\ $\{\psi_{k}(x)\}$\ are
linearly independent. Therefore we have
 $$V_{N}={\mathrm {span}}\{\phi_{k}(x):k=0,1,2,\ldots,N-5\},$$
 and
 $$V_{N}^{*}={\mathrm {span}}\{\psi_{k}(x):k=0,1,2,\ldots,N-5\}.$$
  The following two lemmas are needed.
\begin{lem}
\label{dersy5}
 \begin{equation}
D^{5}J_{k+5}^{(-3,-2)}(x)=\dd-3(k+1)(k+2)(k+4)(k+5)
R_{k}^{(2,3)}(x).\nonumber
 \end{equation}
 \end{lem}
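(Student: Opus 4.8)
The plan is to mimic, in the fifth-order setting, the computation carried out for Lemma~\ref{dersy}. Concretely, I would start from the factorization $J_{k+5}^{(-3,-2)}(x)=(1-x^2)^2(1-x)\,R_k^{(3,2)}(x)$ and apply Leibnitz's rule to differentiate the product five times, splitting the polynomial prefactor $p(x):=(1-x^2)^2(1-x)=(1-x)^3(1+x)^2$ and the Jacobi factor $R_k^{(3,2)}(x)$. This yields $D^5 J_{k+5}^{(-3,-2)}(x)$ as a sum $\sum_{i=0}^{5}\binom{5}{i}\,p^{(5-i)}(x)\,D^{i}R_k^{(3,2)}(x)$, in which the coefficients $p^{(5-i)}(x)$ are explicit low-degree polynomials; note $p$ has degree $5$, so $p^{(5)}$ is a constant and the top term contributes a fixed multiple of $R_k^{(3,2)}(x)$.

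The next step is to reduce the high-order derivatives $D^5 R_k^{(3,2)}$, $D^4 R_k^{(3,2)}$, $D^3 R_k^{(3,2)}$ back down to lower order. For this I would use the Sturm-Liouville equation satisfied by $R_k^{(3,2)}$ (the singular Sturm-Liouville equation displayed in Section~2, with $\alpha=3,\beta=2$) repeatedly: it expresses $(1-x^2)D^2R_k^{(3,2)}$ in terms of $DR_k^{(3,2)}$ and $R_k^{(3,2)}$, and by differentiating that identity one obtains analogous reductions for $(1-x^2)D^3$, $(1-x^2)D^4$, etc. Combined with multiplying through by the appropriate powers of $(1-x)$ and $(1+x)$ hidden in $p^{(5-i)}(x)$, everything collapses to an expression of the form $[\text{poly in }x]\,DR_k^{(3,2)}(x)+[\text{poly in }x]\,R_k^{(3,2)}(x)$; this is exactly the analogue of the intermediate identity $(1-x^2)(1-x)D^3R_k^{(2,1)}=\dots$ used in the proof of Lemma~\ref{dersy}. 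After collecting terms, massive cancellation should occur (as it must, since the left side has degree $k$), leaving a clean multiple of $(1-x)DR_k^{(3,2)}(x)-cR_k^{(3,2)}(x)$ for some constant.

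Finally, I would convert this back to the target basis $R_k^{(2,3)}$. Here the tools are relation~(\ref{shift3}) (with $\alpha+1=3$, i.e. $\alpha=2$, $\beta=2$) to handle the factor $(1-x)R_k^{(3,2)}(x)$, relation~(\ref{dershift}) with $q=1$ to rewrite $DR_k^{(3,2)}(x)$ as a multiple of $R_{k-1}^{(4,3)}(x)$, and then the shift relations~(\ref{shift1}) and~(\ref{shift2}) to step the parameter pair $(4,3)$ down and $(3,2)$ across to $(2,3)$ — exactly the chain of moves $R^{(3,2)}\to R^{(2,1)}\to R^{(1,2)}$ performed at the end of the proof of Lemma~\ref{dersy}, now one notch higher. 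Tracking the accumulated rational prefactor through these substitutions and simplifying should produce the stated constant $-3(k+1)(k+2)(k+4)(k+5)$.

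The main obstacle is purely the bookkeeping: there are far more Leibnitz terms than in the third-order case (six instead of four), each $p^{(5-i)}$ is a genuine polynomial rather than a monomial, and the Sturm-Liouville reduction has to be iterated up to three times, so the intermediate expressions are long and the final cancellation is delicate. A sensible safeguard is to verify the resulting formula at small $k$ (e.g. $k=0,1,2$, where $D^5 J_{k+5}^{(-3,-2)}$ and $R_k^{(2,3)}$ are explicit low-degree polynomials) to catch arithmetic slips. As in the paper's own treatment of Lemma~\ref{D21}, the remaining lower-order derivatives $D^4,D^3,D^2,D,\mathrm{id}$ applied to $J_{k+5}^{(-3,-2)}$ would then be obtained by the same procedure, only truncated earlier, so no new idea is needed for them.
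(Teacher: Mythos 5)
Your plan is workable in principle, but it is not the route the paper takes, and the paper's route is considerably more economical. You propose to redo, one order higher, the brute-force computation behind Lemma~\ref{dersy}: Leibnitz on $(1-x)^3(1+x)^2R_k^{(3,2)}(x)$, repeated Sturm--Liouville reduction of the high derivatives of $R_k^{(3,2)}$, then conversion to $R_k^{(2,3)}$ via (\ref{shift3}), (\ref{dershift}), (\ref{shift1}), (\ref{shift2}). The paper instead exploits the factorization $J_{k+5}^{(-3,-2)}=(1-x^2)(1-x)\cdot\bigl[(1-x^2)R_k^{(3,2)}(x)\bigr]$ together with relation~(\ref{6}) (at $\alpha=2$, $\beta=1$), which writes $(1-x^2)R_k^{(3,2)}$ as a three-term combination of $R_j^{(2,1)}$, $j=k,k+1,k+2$; hence $J_{k+5}^{(-3,-2)}$ is a three-term combination of the already-analyzed functions $J_{j+3}^{(-2,-1)}$. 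Splitting $D^5=D^2\circ D^3$ and invoking the established formula (\ref{diff3}) for $D^3J_{j+3}^{(-2,-1)}$, followed by (\ref{dershift}) with $q=2$ and the shifts (\ref{shift1})--(\ref{shift2}), finishes the job with no new Leibnitz expansion at all. What the paper's approach buys is the reuse of Lemma~\ref{dersy} and a short, structured calculation; what your approach buys is self-containedness, at the price of the bookkeeping you yourself flag. One caution on your version: the claim that the Sturm--Liouville reductions collapse everything to the two-term form $[\text{poly}]\,DR_k^{(3,2)}+[\text{poly}]\,R_k^{(3,2)}$ is optimistic, since the prefactors $p^{(5-i)}(x)$ do not all carry the factor $(1-x^2)$ needed to apply the (differentiated) Sturm--Liouville identity, so residual terms in $D^2R_k^{(3,2)}$ and $D^3R_k^{(3,2)}$ will survive and must be handled separately through (\ref{dershift}) and the shift relations; this does not break the argument, but it means the final basis conversion is heavier than your sketch suggests.
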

\noindent
\begin{proof} Setting\,$\al=2,\bt=1$ in relation \eqref{6}, we get
\bq(1-x^2)R_{k}^{(3,2)}(x)=\frac{12}{(2k+5)_3}
\left[(k+2)(2k+7)R_{k}^{(2,1)}(x)+2(k+3)R_{k+1}^{(2,1)}(x)
\right.\nonumber\eq\bq\left.-(k+4)(2k+5)R_{k+2}^{(2,1)}(x) \right]
.\nonumber\eq Making use of this relation and with the aid of the
two relations (\ref{dershift}) (for $q=2$) and (\ref{diff3}), we
obtain \bq D^{5}J_{k+5}^{(-3,-2)}(x)=\frac{1}{(2k+5)_{3}}
\left[(2k+7)(k-1)_{7}R_{k-2}^{(3,4)}(x)+2(k)_{7}R_{k-1}^{(3,4)}(x)\right.\nonumber\eq
\bq\left.-(2k+5)(k+1)_{7}R_{k}^{(3,4)}(x) \right].\nonumber\eq
Finally, from the two relations (\ref{shift1}) and (\ref{shift2}),
and after some lengthy manipulation, we get
 \bq
  D^{5}J_{k+5}^{(-3,-2)}(x)
=-3(k+1)(k+2)(k+4)(k+5)R_{k}^{(2,3)}(x).\nonumber\eq
\end{proof}
\begin{lem}
\label{dersy54}
 \begin{eqnarray}&&D^{4}J_{k+5}^{(-3,-2)}(x)=-\frac{3(k+2)(k+4)_3}{2k+7}R_{k+1}^{(2,3)}(x)+
 \frac{3(k+1)_2(k+4)_2}{2(k+\frac52)_{2}}R_{k}^{(2,3)}(x)\\& &+
 \frac{3(k)_{3}(k+4)}{(2k+5)}R_{k-1}^{(2,3)}(x), \nonumber
 \\
 &&D^{3}J_{k+5}^{(-3,-2)}(x)=-\frac{3(k+4)_{4}}{4(k+\frac72)_{2}}R_{k+2}^{(2,3)}(x)+
 \frac{3(k+2)(k+4)_{3}}{2\, (k+\frac52)_{3}}R_{k+1}^{(2,3)}(x)\\&
 &+\frac{3(k+1)_2(k+4)_2}{2(k+\frac{5}{2})_{2}}R_{k}^{(2,3)}(x)-
 \frac{3(k)_{3}(k+4)}{2(k+\frac32)_{3}}R_{k-1}^{(2,3)}(x)-
 \frac{3(k-1)_{4}}{4(k+\frac32)_{2}}R_{k-2}^{(2,3)}(x), \nonumber
 \end{eqnarray}
  \begin{eqnarray}
&&D^{2}J_{k+5}^{(-3,-2)}(x)=-\frac{3(k+4)_{5}}{8(k+3)(k+\frac72)_{3}}R_{k+3}^{(2,3)}(x)+
 \frac{9(k+4)_{4}}{8(k+\frac{5}{2})_{4}}R_{k+2}^{(2,3)}(x)\nonumber\\&
 &+\frac{9(k+2)(k+4)_{3}}{8(k+\frac52)_{3}}R_{k+1}^{(2,3)}(x)-
 \frac{9(k+1)_2(k+4)_2}{4(k+\frac32)_{4}}R_{k}^{(2,3)}(x)\\& &-
 \frac{9(k)_{3}(k+4)}{8(k+\frac32)_{3}}R_{k-1}^{(2,3)}(x)+
 \frac{9(k-1)_{4}}{8(k+\frac12)_{4}}R_{k-2}^{(2,3)}(x)+
 \frac{3(k-2)_{5}}{8(k+3)(k+\frac12)_{3}}R_{k-3}^{(2,3)}(x),
 \nonumber
\\
&&DJ_{k+5}^{(-3,-2)}(x)=-\frac{3(k+5)_{5}}{16(k+3)(k+\frac72)_{4}}R_{k+4}^{(2,3)}(x)+
 \frac{3(k+4)_{5}}{4(k+3)(k+\frac52)_{5}}R_{k+3}^{(2,3)}(x)\nonumber\\&
 &+\frac{3(k+4)_{4}}{4(k+\frac52)_{4}}R_{k+2}^{(2,3)}(x)-
 \frac{9(k+2)(k+4)_{3}}{4(k+\frac32)_{5}}R_{k+1}^{(2,3)}(x)-
 \frac{9(k+1)_2(k+4)_2}{8(k+\frac32)_{4}}R_{k}^{(2,3)}(x)\\& &+
 \frac{9(k)_{3}(k+4)}{4(k+\frac12)_{5}}R_{k-1}^{(2,3)}(x)+
 \frac{3(k-1)_{4}}{4(k+\frac12)_{4}}R_{k-2}^{(2,3)}(x)-
 \frac{3(k-2)_{5}}{4(k+3)(k-\frac12)_{5}}R_{k-3}^{(2,3)}(x)\nonumber\\&
 &-\frac{3(k-3)_{5}}{16(k+3)(k-\frac12)_{4}}R_{k-4}^{(2,3)}(x),\nonumber
\end{eqnarray}
\begin{eqnarray}
and\nonumber\\
&&J_{k+5}^{(-3,-2)}(x)=-\frac{3(k+6)_{5}}{32(k+3)(k+\frac72)_{5}}R_{k+5}^{(2,3)}(x)+
 \frac{15(k+5)_{5}}{32(k+3)(k+\frac52)_{6}}R_{k+4}^{(2,3)}(x)\nonumber\\&
 &+\frac{15(k+4)_{5}}{32(k+3)(k+\frac52)_{5}}R_{k+3}^{(2,3)}(x)-
 \frac{15(k+4)_{4}}{8(k+\frac32)_{6}}R_{k+2}^{(2,3)}(x)-
 \frac{15(k+2)(k+4)_{3}}{16(k+\frac32)_{5}}R_{k+1}^{(2,3)}(x)\\&
 &+\frac{45(k+1)_2(k+4)_2}{16(k+\frac12)_{6}}R_{k}^{(2,3)}(x)+
 \frac{15(k)_{3}(k+4)}{16(k+\frac12)_{5}}R_{k-1}^{(2,3)}(x)-
 \frac{15(k-1)_{4}}{8(k-\frac12)_{6}}R_{k-2}^{(2,3)}(x)\nonumber\\&
 &-\frac{15(k-2)_{5}}{32(k+3)(k-\frac12)_{5}}R_{k-3}^{(2,3)}(x)+
 \frac{15(k-3)_{5}}{32(k+3)(k-\frac32)_{6}}R_{k-4}^{(2,3)}(x)+
 \frac{3(k-4)_{5}}{32(k+3)(k-\frac32)_{5}}R_{k-5}^{(2,3)}(x).\nonumber
 \end{eqnarray}
\end{lem}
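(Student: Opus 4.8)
The plan is to mimic the proof of Lemma~\ref{dersy5}: reduce $J_{k+5}^{(-3,-2)}(x)$ to a short combination of the lower-order generalized Jacobi polynomials $J_{j+3}^{(-2,-1)}(x)$, differentiate term by term, and then invoke the expansions already recorded in Lemma~\ref{dersy} and Theorem~\ref{thm3}.

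First I would reuse the opening step of the proof of Lemma~\ref{dersy5}: setting $\alpha=2,\beta=1$ in \eqref{6} writes $(1-x^2)R_k^{(3,2)}(x)$ as a three-term combination of $R_k^{(2,1)}(x),R_{k+1}^{(2,1)}(x),R_{k+2}^{(2,1)}(x)$. Multiplying by $(1-x^2)(1-x)$ and using $(1-x^2)(1-x)R_j^{(2,1)}(x)=J_{j+3}^{(-2,-1)}(x)$ gives
\begin{multline*}
J_{k+5}^{(-3,-2)}(x)=\frac{12}{(2k+5)_{3}}\Big[(k+2)(2k+7)\,J_{k+3}^{(-2,-1)}(x)\\
+2(k+3)\,J_{k+4}^{(-2,-1)}(x)-(k+4)(2k+5)\,J_{k+5}^{(-2,-1)}(x)\Big].
\end{multline*}
Applying $D^{q}$ ($q=0,1,2,3,4$) to this identity reduces the lemma to expanding $D^{q}J_{j+3}^{(-2,-1)}(x)$ for $j=k,k+1,k+2$. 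For $q=3$ this is $2(j+1)(j+3)R_j^{(1,2)}(x)$ by \eqref{diff3}; for $q=2,1,0$ it is, respectively, the three-, five- and seven-term combinations of the $R_i^{(1,2)}(x)$ obtained by specializing \eqref{e2}, \eqref{e1} and \eqref{sumJ} of Theorem~\ref{thm3} to a single nonzero coefficient; and for $q=4$ one differentiates the $q=3$ relation once more and applies \eqref{dershift} with $q=1$, which gives $D^{4}J_{j+3}^{(-2,-1)}(x)=\frac12\,j(j+1)(j+3)(j+4)\,R_{j-1}^{(2,3)}(x)$. This last expression already lies in the target basis, so substituting it into the displayed identity and simplifying the Pochhammer products reproduces the formula for $D^{4}J_{k+5}^{(-3,-2)}(x)$ immediately.

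For $q=0,1,2,3$ there remains the index shift $(1,2)\to(2,3)$. Eliminating the mixed terms between \eqref{shift1} and \eqref{shift2} yields two elementary index-raising identities, one expressing $R_m^{(\alpha,\beta)}(x)$ as a two-term combination of $R_m^{(\alpha,\beta+1)}(x),R_{m-1}^{(\alpha,\beta+1)}(x)$ and one expressing $R_m^{(\alpha,\beta)}(x)$ as a two-term combination of $R_m^{(\alpha+1,\beta)}(x),R_{m-1}^{(\alpha+1,\beta)}(x)$; applying both in turn sends each $R_m^{(1,2)}(x)$ to a three-term combination of $R_m^{(2,3)}(x),R_{m-1}^{(2,3)}(x),R_{m-2}^{(2,3)}(x)$. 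Tracking the supports through the expansion into $J_{j+3}^{(-2,-1)}$ polynomials and then into the $R^{(2,3)}$ basis gives exactly the band widths asserted in the lemma — a $5$-, $7$-, $9$- and $11$-term combination of the $R_i^{(2,3)}(x)$ for $q=3,2,1,0$ — and collecting the rational coefficients yields the stated closed forms.

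The essential content is thus already contained in Lemma~\ref{dersy5} and Theorem~\ref{thm3}; the main obstacle is purely computational. For $q=1$ and especially $q=0$ one must combine nine, respectively eleven, contributions, each an index-raised three-term sum, and verify that the accumulated coefficients collapse to the single-fraction Pochhammer expressions displayed; no new idea is needed, which is why — as the authors already note for the companion Lemma~\ref{D21} — the full write-up is long but routine. Equivalently, one may apply Leibniz's rule directly to $(1-x^2)^2(1-x)R_k^{(3,2)}(x)$, use the Sturm-Liouville equation satisfied by $R_k^{(3,2)}(x)$ to lower the order of the surviving derivatives, and finish with \eqref{shift3}, \eqref{dershift}, \eqref{shift1} and \eqref{shift2}, exactly paralleling the proof of Lemma~\ref{dersy}.
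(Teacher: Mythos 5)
The paper states this lemma without any proof (only the $D^{5}$ case, Lemma \ref{dersy5}, is proved), and your proposal correctly supplies one by the natural extension of that very argument: the decomposition $J_{k+5}^{(-3,-2)}=\frac{12}{(2k+5)_{3}}\big[(k+2)(2k+7)J_{k+3}^{(-2,-1)}+2(k+3)J_{k+4}^{(-2,-1)}-(k+4)(2k+5)J_{k+5}^{(-2,-1)}\big]$ is valid, the reuse of \eqref{diff3} and Theorem \ref{thm3} for $D^{q}J_{j+3}^{(-2,-1)}$ is exactly what is available, and the two index-raising identities do follow by eliminating $R^{(\alpha-1,\beta)}$ (resp.\ $R^{(\alpha,\beta-1)}$) between \eqref{shift1} taken at index $k-1$ and \eqref{shift2} taken at index $k$. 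I checked that your route reproduces the stated bandwidths ($5,7,9,11$ terms for $q=3,2,1,0$) and the exact coefficients in the $D^{4}$ formula as well as the coefficients of $R_{k+2}^{(2,3)}$ and $R_{k}^{(2,3)}$ in the $D^{3}$ formula, so the remaining work is indeed only the routine coefficient bookkeeping you describe.
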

 Applying Petrov-Galerkin method to (\ref{5})-(\ref{5hbc}) and if we make use of the two Lemmas \ref{dersy5} and \ref{dersy54}, then after performing some lengthy manipulation, the
 numerical solution of (\ref{5})-(\ref{5hbc}) can be obtained. This solution is given in the following Theorem.
 \begin{thm}
 \label{thm6}
  If
 $u_N(x)=\dd\sum_{0}^{N-5}a_{k}J_{k+5}^{(-3,-2)}(x)$ is the
 Petrov-Galerkin approximation to (\ref{5})-(\ref{5hbc}), then the
 expansion coefficients $\{a_k:k=0,1,\dots,N-5\}$ satisfy the matrix
 system \bq (B_2+\al_2\,G_4+\bt_2\,G_3+\g_2\,G_2+\delta_2\,G_1+\mu_2\,G_0)\mathbf{a}=\mathbf{f^*},
 \label{system2}\eq
 where the nonzero elements of the matrices $B_2$ and $G_i, (0\leqslant i\leqslant4)$
 are given as follows:
\begin{alignat*}{3} &b^2_{kk}=r_{k},
&&g^{4,2}_{kk}=\frac{r_{k}}{2(k+\frac52)_{2}},
 &&g^{4,2}_{k,k+1}=\frac{3(k+1)_{3}(k+5)}{2k+7},\f
 &g^{4,2}_{k+1,k}=\frac{-3(k+2)_{5}}{(k+3)(2k+7)},
  &&g^{3,2}_{kk}=\frac{\,r_{k}}{2(k+\frac{5}{2})_{2}},
&&g^{3,2}_{k,k+1}=\frac{-3(k+1)_{3}(k+5)}{2\, (k+\frac52)_{3}},\f
&g^{3,2}_{k,k+2}=\frac{-3(k+1)_{4}}{4\, (k+\frac72)_{2}},
  && g^{3,2}_{k+1,k}=\frac{3(k+2)(k+4)_{3}}{2\, (k+\frac52)_{3}},
   && g^{3,2}_{k+2,k}=\frac{-3(k+4)_{4}}{4(k+\frac72)_{2}},\f
 \end{alignat*}
 \begin{alignat*}{3}
 &g^{2,2}_{kk}=\frac{3\, r_{k}}{4(k+\frac32)_{4}},
&&g^{2,2}_{k,k+1}=\frac{9(k+1)_{3}(k+5)}{8(k+\frac52)_{3}},
  &&g^{2,2}_{k,k+2}=\frac{-9(k+1)_{4}}{8(k+\frac{5}{2})_{4}},\f
&g^{2,2}_{k,k+3}=\frac{-3\, (k+1)_{5}}{8(k+6)\, (k+\frac72)_{3}},
 &&g^{2,2}_{k+1,k}=\frac{-9\, (k+2)\, (k+4)_{3}}{8\, (k+\frac52)_{3}},
&&g^{2,2}_{k+2,k}=\frac{-9\, (k+4)_{4}}{8\, (k+\frac52)_{4}},\f
&g^{2,2}_{k+3,k}=\frac{3(k+4)_{5}}{8(k+3)\, (k+\frac72)_{3}},
&&g^{1,2}_{kk}=\frac{3\, r_{k}}{8(k+\frac32)_{4}},
&&g^{1,2}_{k,k+1}=\frac{-9(k+1)_{3}(k+5)}{4(k+\frac32)_{5}},\f
&g^{1,2}_{k,k+2}=\frac{-3(k+1)_{4}}{4(k+\frac52)_{4}},
&&g^{1,2}_{k,k+3}=\frac{3(k+1)_{5}}{4(k+6)(k+\frac52)_{5}},
&&g^{1,2}_{k,k+4}=\frac{3(k+1)_{5}}{16(k+7)(k+\frac72)_{4}},\f
&g^{1,2}_{k+1,k}=\frac{9(k+2)(k+4)_{3}}{4(k+\frac32)_{5}},
&&g^{1,2}_{k+2,k}=\frac{-3(k+4)_{4}}{4(k+\frac52)_{4}},
&&g^{1,2}_{k+3,k}=\frac{-3(k+4)_{5}}{4(k+3)(k+\frac52)_{5}},\f
&g^{1,2}_{k+4,k}=\frac{3(k+5)_{5}}{16(k+3)(k+\frac72)_{4}},
&&g^{0,2}_{kk}=\frac{15\, r_{k}}{16(k+\frac12)_{6}},
&&g^{0,2}_{k,k+1}=\frac{15(k+1)_{3}(k+5)}{16(k+\frac32)_{5}},\f
&g^{0,2}_{k,k+2}=\frac{-15(k+1)_{4}}{8(k+\frac32)_{6}},
&&g^{0,2}_{k,k+3}=\frac{-15(k+1)_{5}}{32(k+6)(k+\frac52)_{5}},
&&g^{0,2}_{k,k+4}=\frac{15(k+1)_{5}}{32(k+7)(k+\frac52)_{6}},\f
&g^{0,2}_{k,k+5}=\frac{3(k+1)_{5}}{32(k+8)(k+\frac72)_{5}},\
&&g^{0,2}_{k+1,k}=\frac{-15(k+2)(k+4)_{3}}{16(k+\frac32)_{5}},\
&&g^{0,2}_{k+2,k}=\frac{-15(k+4)_{4}}{8(k+\frac32)_{6}},\f
&g^{0,2}_{k+3,k}=\frac{15(k+4)_{5}}{32(k+3)(k+\frac52)_{5}},\
&&g^{0,2}_{k+4,k}=\frac{15(k+5)_{5}}{32(k+3)(k+\frac52)_{6}},
 &&g^{0,2}_{k+5,k}=\frac{-3(k+6)_{5}}{32(k+3)(k+\frac72)_{5}},
 \end{alignat*}
 where\ $r_{k}=3(k+1)(k+2)(k+4)(k+5).$
\end{thm}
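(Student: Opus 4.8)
The plan is to reproduce, for the fifth-order problem, the chain of reductions carried out in Section~3 for the third-order problem (Theorem~\ref{thm3}, the derivation of the linear system, and Theorem~\ref{thm4}), now driven by Lemmas~\ref{dersy5} and~\ref{dersy54} in place of Lemmas~\ref{dersy} and~\ref{D21}. First I would substitute $u_N(x)=\sum_{j=0}^{N-5}a_j\,\phi_j(x)$, $\phi_j=J_{j+5}^{(-3,-2)}$, into the dual Petrov--Galerkin statement and use linearity of each $D^q$ ($0\le q\le 5$) to write $D^qu_N=\sum_i e_{i,q}\,R_i^{(2,3)}(x)$, where every coefficient $e_{i,q}$ is an explicit linear combination of the $a_j$. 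For $q=5$ this is immediate from Lemma~\ref{dersy5}: $e_{i,5}=-r_i\,a_i$ with $r_i=3(i+1)(i+2)(i+4)(i+5)$. For $q=4,3,2,1,0$ the five banded formulas of Lemma~\ref{dersy54}, after a shift of the summation index $j\mapsto k$, give $e_{k,q}$ as a combination of $a_{k-p},\dots,a_{k+p}$ with bandwidth $p=5-q$ — a diagonal pattern at $q=5$, tridiagonal at $q=4$, widening by one off-diagonal at each lower order, up to the eleven-diagonal ($p=5$) pattern at $q=0$.

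Next I would exploit that the test functions are weighted Jacobi polynomials: since
$$\psi_k(x)=(1-x^2)^2(1+x)\,R_k^{(2,3)}(x)=(1-x)^2(1+x)^3\,R_k^{(2,3)}(x)=w^{(2,3)}(x)\,R_k^{(2,3)}(x),$$
every inner product $(g,\psi_k)$ equals the weighted inner product $(g,R_k^{(2,3)})_{w^{(2,3)}}$. Applying this with $g=-D^5u_N+\al_2D^4u_N+\bt_2D^3u_N-\g_2D^2u_N-\delta_2Du_N+\mu_2u_N$ and invoking the orthogonality relation~(\ref{ortho}) for $R^{(2,3)}$ extracts exactly the coefficient of $R_k^{(2,3)}$ in $g$, so the Galerkin equations collapse to
\[
\bigl(-e_{k,5}+\al_2 e_{k,4}+\bt_2 e_{k,3}-\g_2 e_{k,2}-\delta_2 e_{k,1}+\mu_2 e_{k,0}\bigr)\,h_k^{(2,3)}=\bigl(f,R_k^{(2,3)}\bigr)_{w^{(2,3)}},\qquad 0\le k\le N-5 .
\]
Dividing through by $h_k^{(2,3)}=\frac{128}{(k+1)(k+2)(k+3)(k+4)(k+5)}$ and setting $f_k^{*}=(f,R_k^{(2,3)})_{w^{(2,3)}}/h_k^{(2,3)}$ puts the system in scalar form; re-expressing the $e_{k,q}$ as matrix--vector products then identifies the diagonal matrix $B_2$ (from $-e_{k,5}$, whence $b^2_{kk}=r_k$) and the five band matrices $G_4,\dots,G_0$, and the sign carried by each term of~(\ref{5}) is absorbed into the corresponding matrix (so, e.g., the entries of $G_2$ and $G_1$ are the negatives of the coefficients in the $D^2$ and $D^1$ expansions). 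Reading these entries off Lemmas~\ref{dersy5}--\ref{dersy54} with the index shifts above yields precisely the formulas in the statement.

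The only genuine work — and hence the step I expect to dominate — is the purely computational bookkeeping in this last identification: keeping track of the shift $j\mapsto k$ in each of the five expansions of Lemma~\ref{dersy54}, distributing the signs from~(\ref{5}), and rewriting each resulting product of linear factors as a ratio of Pochhammer symbols so it matches the printed entries (for instance recognizing $3(i+1)_2(i+4)_2=r_i$, $(i+2)(i+4)_3=(i+2)_5/(i+3)$, and the analogous collapses at the lower orders). There is no conceptual obstacle beyond what already appeared in the third-order case; the fifth-order case is merely longer because the bandwidths are larger. For completeness one also notes that Lemmas~\ref{dersy5} and~\ref{dersy54} themselves are obtained (exactly as Lemma~\ref{D21} from Lemma~\ref{dersy}) by applying Leibniz's rule to $\phi_k=(1-x^2)^2(1-x)R_k^{(3,2)}$ and then repeatedly using the contiguous relations~(\ref{shift1})--(\ref{dershift}) together with~(\ref{6}) to re-express everything in the $R^{(2,3)}$ basis.
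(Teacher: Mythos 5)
Your proposal is correct and follows essentially the same route as the paper, which proves Theorem \ref{thm6} exactly by applying the Petrov--Galerkin formulation with the expansions of Lemmas \ref{dersy5} and \ref{dersy54}, using orthogonality against the weighted test functions, and carrying out the same ``lengthy manipulation'' to read off the band entries (mirroring the third-order derivation of Theorems \ref{thm3} and \ref{thm4}). Your bookkeeping details --- the normalization $h_k^{2,3}=\frac{128}{(k+1)(k+2)(k+3)(k+4)(k+5)}$, the sign absorption from equation \eqref{5}, and the Pochhammer identifications such as $3(k+1)_2(k+4)_2=r_k$ --- all check out against the stated entries.
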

{\section {Structure of the coefficient matrices in the linear
systems (\ref {system1}) and (\ref {system2}) }}
 In this section, we discuss the structure of the
 coefficient matrices $B_1$ and $E_{3-q}\ (1\le q\le 3)$ in the
 linear system (\ref {system1}), and the
 coefficient matrices $B_2$ and $G_{5-q}\ (1\le q\le 5)$ in the
 linear system (\ref {system2}). Hence, we discuss the structure of
 the two combined matrices $D_1=B_1+\dd \al_1\, E_{2}+\dd \bt_1\, E_{1}+\dd \g_1\,
 E_{0}$ and
 $D_2=B_2+\al_2\,G_4+\bt_2\,G_3+\g_2\,G_2+\delta_2\,G_1+\mu_2\,G_0$.
 Also we discuss the influence of these structures on the efficiency
 of the two systems (\ref {system1}) and (\ref {system2}).\s\h
 It is clear that each of the matrices $B_1$ and $B_2$ is diagonal, so it is worthly
 to note that the two cases correspond to $\al_1=\bt_1=\g_1=0$ in (\ref
 {system1}) and
 $\al_2=\bt_2=\g_2=\delta_2=\mu_2=0$ in (\ref {system2}) lead to two
 linear systems with diagonal matrices. The result for these two cases are summarized in the
 following important two corollaries.
 \begin{cor}
If\ $u_{N}(x)=\dd\sum_{k=0}^{N-3}\ a_{k}\
 J_{k+3}^{(-2,-1)}(x)$\ and\ \ $\al_1=\bt_1=\g_1=0$,
 is the Galerkin approximation to problem (\ref{900})-(\ref{1000}), then the expansion
 coefficients\
 $\{a_{k}:\,\ k=0,1,\cdots,N-3\}$ are given explicitly by
  \bq
  a_{k}=\frac{k+2}{16}f_{k},\ k=0,1,\cdots,N-3,\label{diagonal}
  \nonumber\eq
  where $f_{k}=\dd\int\limits^1_{-1}(1-x^2)(1+x)f(x)R_{k}^{(1,2)}(x)dx.$
\end{cor}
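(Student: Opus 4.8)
The plan is to specialize the apparatus already built for the third-order problem to the degenerate situation in which all lower-order terms vanish. With $\al_1=\bt_1=\g_1=0$ the equation \eqref{900} is just $u^{(3)}=f$, and the defining relations \eqref{gal} reduce to $\bigl(D^3u_N,\psi_k\bigr)=\bigl(f,\psi_k\bigr)$, $k=0,1,\dots,N-3$. In the linear system produced after \eqref{gal} the matrices $E_0,E_1,E_2$ are then multiplied by zero, leaving the purely diagonal system $B_1\mathbf{a}=\mathbf{f^{*}}$; by Theorem \ref{thm4}, $B_1$ has diagonal entries $2(k+1)(k+3)$, so its $k$th equation reads $2(k+1)(k+3)\,a_k=f^{*}_k$, with $f^{*}_k=f_k/h_k^{1,2}$ and $f_k=(f,R_k^{(1,2)})_{w}$, $w=(1-x^2)(1+x)$.

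To turn this into the stated closed form I would compute $h_k^{1,2}$ explicitly: from the general formula for $h_n^{\al,\bt}$ in \eqref{ortho}, setting $\al=1$, $\bt=2$ (so $\la=4$) and simplifying the Gamma-factors gives $h_k^{1,2}=8/\bigl((k+1)(k+2)(k+3)\bigr)$. Hence
\[
a_k=\frac{f^{*}_k}{2(k+1)(k+3)}=\frac{f_k}{2(k+1)(k+3)\,h_k^{1,2}}=\frac{(k+1)(k+2)(k+3)}{16\,(k+1)(k+3)}\,f_k=\frac{k+2}{16}\,f_k ,
\]
which is the claimed identity, since $w=(1-x^2)(1+x)=(1-x)(1+x)^2$ is exactly the Jacobi weight $w^{1,2}$ attached to $R_k^{(1,2)}$, so that $f_k=\int_{-1}^{1}(1-x^2)(1+x)f(x)R_k^{(1,2)}(x)\,dx$ as written in the statement. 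If one prefers not to invoke Theorem \ref{thm4}, the same conclusion follows at once from Lemma \ref{dersy} and \eqref{bk}: expanding $D^3u_N=\sum_j 2(j+1)(j+3)a_j R_j^{(1,2)}$ and testing against $\psi_k=w^{1,2}R_k^{(1,2)}$, the orthogonality relation \eqref{ortho} with $(\al,\bt)=(1,2)$ gives $2(k+1)(k+3)\,a_k\,h_k^{1,2}=f_k$.

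I do not expect a genuine obstacle here. The substantive computation — the identity \eqref{diff3} for $D^3J_{k+3}^{(-2,-1)}$, which simultaneously makes $B_1$ diagonal and supplies the factor $2(k+1)(k+3)$ — has already been performed in Lemma \ref{dersy}. The only point requiring care is the bookkeeping with $h_k^{1,2}$: it is precisely the cancellation of the $(k+1)(k+3)$ factors there that produces the unexpectedly clean coefficient $(k+2)/16$.
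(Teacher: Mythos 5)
Your proposal is correct and follows the same route the paper takes: with the lower-order coefficients set to zero, system (\ref{system1}) degenerates to the diagonal system $B_1\mathbf{a}=\mathbf{f^*}$ with $b^1_{kk}=2(k+1)(k+3)$ and $f^*_k=f_k/h_k^{1,2}$, and the cancellation against $h_k^{1,2}=8/\bigl((k+1)(k+2)(k+3)\bigr)$ yields $a_k=\tfrac{k+2}{16}f_k$. Your explicit verification of $h_k^{1,2}$ from \eqref{ortho} is a detail the paper merely asserts, but it is the same argument.
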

\begin{cor}
If\ $u_{N}(x)=\dd\sum_{k=0}^{N-5}\ a_{k}\
 J_{k+5}^{(-3,-2)}(x)$\ and\ \ $\al_2=\bt_2=\g_2=\delta_2=\mu_2=0$,
 is the Petrov-Galerkin approximation to
 the problem (\ref{5})-(\ref{5hbc}), then the expansion
 coefficients\\
 $\{a_{k}:k=0,1,\cdots,N-5\}$ are given explicitly by
  \bq
  a_{k}=\frac{k+3}{384}f_{k},\ k=0,1,\ldots,N-5,\label{diagonal5}
  \nonumber\eq
  where $f_{k}=\dd\int\limits^1_{-1}(1-x^2)^2(1+x)f(x)R_{k}^{(2,3)}(x)dx.$
\end{cor}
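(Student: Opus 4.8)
The plan is to obtain the statement as a direct specialisation of Theorem~\ref{thm6}. Setting $\al_2=\bt_2=\g_2=\delta_2=\mu_2=0$ reduces the fifth-order problem (\ref{5})--(\ref{5hbc}) to $-u^{(5)}(x)=f(x)$ subject to the prescribed homogeneous conditions, and makes the matrix equation (\ref{system2}) collapse to $B_2\,\mathbf{a}=\mathbf{f^{*}}$. By Theorem~\ref{thm6}, $B_2$ is the diagonal matrix with entries $b^{2}_{kk}=r_k=3(k+1)(k+2)(k+4)(k+5)$, none of which vanishes for $k\ge 0$; hence the system is diagonal and uniquely solvable, with
\[
 a_k=\frac{f^{*}_k}{r_k},\qquad k=0,1,\dots,N-5.
\]
This already gives existence and uniqueness of the Petrov-Galerkin approximation $u_N$; it remains only to make $f^{*}_k$ explicit.

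For that I would retrace the reduction that produced (\ref{system2}), exactly as in the third-order case treated earlier. Testing $-D^{5}u_N$ against $\psi_k(x)=J_{k+5}^{(-2,-3)}(x)=(1-x^2)^2(1+x)\,R_k^{(2,3)}(x)$ and using Lemma~\ref{dersy5} gives $-D^{5}u_N=\sum_j a_j r_j R_j^{(2,3)}$, so the orthogonality relation (\ref{ortho}) with weight $w^{2,3}(x)=(1-x)^2(1+x)^3$ turns the Petrov-Galerkin identity into $a_k\,r_k\,h^{2,3}_k=f_k$, where
\[
 f_k:=\int_{-1}^{1}(1-x^2)^2(1+x)\,f(x)\,R_k^{(2,3)}(x)\,dx .
\]
Thus $f^{*}_k=f_k/h^{2,3}_k$, in complete analogy with the relation $f^{*}_k=f_k/h^{1,2}_k$ of the third-order algorithm. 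Evaluating the normalisation constant $h^{\alpha,\beta}_n$ of (\ref{ortho}) at $(\alpha,\beta)=(2,3)$, $\lambda=6$, and simplifying the resulting ratio of Gamma functions, yields
\[
 h^{2,3}_k=\frac{128}{(k+1)(k+2)(k+3)(k+4)(k+5)} .
\]

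Substituting into $a_k=f^{*}_k/r_k$ and cancelling,
\[
 a_k=\frac{(k+1)(k+2)(k+3)(k+4)(k+5)}{128}\cdot\frac{f_k}{3(k+1)(k+2)(k+4)(k+5)}=\frac{k+3}{384}\,f_k ,
\]
which is the claimed formula. No genuine obstacle arises; the one step requiring care is the bookkeeping for $f^{*}_k$ --- reading off the weight $w^{2,3}=(1-x)^2(1+x)^3$ carried by the test functions $\psi_k$ and evaluating the Gamma-function ratio in $h^{2,3}_k$. Alternatively one could set the lower-order coefficients to zero directly in the Petrov-Galerkin variational identity for (\ref{5})--(\ref{5hbc}) before recasting it in matrix form, which makes the diagonal structure manifest and avoids invoking Theorem~\ref{thm6}.
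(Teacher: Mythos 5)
Your proposal is correct and follows essentially the same route as the paper, which presents the corollary as an immediate consequence of the diagonal structure of $B_2$ in the system (\ref{system2}) when $\al_2=\bt_2=\g_2=\delta_2=\mu_2=0$. Your explicit bookkeeping — identifying the weight $(1-x^2)^2(1+x)=w^{2,3}(x)$ carried by the test functions, computing $h^{2,3}_k=128/\big((k+1)(k+2)(k+3)(k+4)(k+5)\big)$, and cancelling against $r_k=3(k+1)(k+2)(k+4)(k+5)$ — is accurate and simply fills in details the paper leaves implicit.
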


Now, each of the matrices $E_{3-q}\ (1\le q\le 3)$ and  $G_{5-q}\
(1\le q\le 5)$  is a band matrix whose total number of nonzero
diagonals upper or lower the main diagonal is $q$. Thus the
coefficient matrices\ $D_1$\ and $D_2$\  are four-band and six-band
matrices, respectively at most. These special structures of\ $D_1$
and $D_2$ simplify greatly the solution of the two linear systems
(\ref{system1}) and (\ref{system2}). These two systems can be
factorized by $LU$-decomposition and the number of operations
necessary to construct these factorizations are of order\ $21(N-2)$
and $55(N-4)$ respectively, and the number of operations needed to
solve the two triangular systems are of order $13(N-2)$ and
$21(N-4)$ respectively.\s
  {\bf {Note.} } The total number of operations mentioned in the previous
  discussion includes the number of all subtractions, additions, divisions and
  multiplications. (see, \cite{Sch}).

\section{Nonhomogeneous boundary
 conditions}
\h In the following we describe how third- and fifth-order problems
with nonhomogeneous boundary conditions can be
transformed into problems with homogeneous boundary conditions.\\
Let us consider the one-dimensional third-order equation \bq
u^{(3)}(x)-\alpha_1\, u^{(2)}(x)-\beta_1\, u^{(1)}(x)+\gamma_1\,
u(x)=f(x),\quad x\in I=(-1,1),
 \nonumber\eq
 subject to the nonhomogeneous boundary conditions:
 \bq u(\pm1)=a_{\pm},\quad u^{(1)}(1)=a^{1}.
\label{non0}\eq In such case we proceed as follows: \s Set \bq
V(x)=u(x)+a_{0}+a_{1}x+a_{2}x^{2},\label{nonhom}\eq where
\begin{eqnarray*}
\dd a_{0}&=&\frac{-a_{-}-3a_{+}+2a^{1}}{4},\\
\dd a_{1}&=&\frac{a_{-}-a_{+}}{2},\\
\dd a_{2}&=&\frac{-a_{-}+a_{+}-2a^{1}}{4}.
\end{eqnarray*}
The transformation (\ref{nonhom}) turns the nonhomogeneous boundary
conditions
(\ref{non0}) into the homogeneous boundary conditions\\
\bq V(\pm1)=V^{(1)}(1)=0.\label{hom}\eq Hence it suffices to solve
the following modified one-dimensional third-order equation:
 \bq \label{3r3}
V^{(3)}(x)-\alpha_1\, V^{(2)}(x)-\beta_1\, V^{(1)}(x)+\gamma_1\,
V(x)=f^{*}(x),\quad x\in I=(-1,1), \eq subject to the homogeneous
boundary conditions (\ref{hom}), where $V(x)$ is given by
(\ref{nonhom}), and\\ $
f^{*}(x)=f(x)+(-2\al_1\,a_{2}-\bt_1\,a_{1}+\g_1\,a_{0})+(-2\bt_1\,a_{2}+\g_1\,a_{0})x+\g_1\,a_{2}x^{2}.$
\s If we apply the Petrov-Galerkin method to the modified equation
(\ref{3r3}), we get the equivalent system of equations
 \bq
(B_1+\al_1\,E_2+\bt_1\,E_1+\g_1\,E_0)\textbf{a}=\textbf{f}^*,\nonumber\eq
 where
$B_1,E_2,E_1$ and $E_0$ are the matrices defined in Theorem \ref{thm4}, and
$\textbf{f}^*=(f_0^*,f_1^*,\dots,f_{N-3}^*)$,\\
and

\[f^*_k =\begin{cases}
  -2\al_1\,a_{2}-\bt_1\,a_{1}+\g_1\,a_{0},\quad &k = 0, \\
  \frac{6}{5}(-2\bt_1\,a_{2}+\g_1\,a_{0}),\quad &k = 1, \\
   \frac{10}{7}\g_1\,a_{2},\quad &k = 2, \\
f_k, \quad & k\geqslant3,\\
  \end{cases}\] where, $f_k=\dd\int_{-1}^1(1-x^2)(1+x)\,R_k^{(1,2)}(x)\,f(x)\,dx$.\s
  \h We can apply the same procedures to solve the fifth-order
  equation \bq
-u^{(5)}(x)+\al_2\, u^{(4)}(x)+\bt_2\, u^{(3)}(x)-\g_2\,
u^{(2)}(x)-\delta_2\, u^{(1)}(x)+\mu_2\,
  u(x)=f(x),\quad x\in (-1,1),\label{5non}
 \eq subject to the nonhomogeneous boundary conditions
 \bq\label{5nhbc}
u(\pm 1)=a_{\pm},\quad u^{(1)}(\pm 1)=\overset{1}{a_{\pm}},\quad
u^{(2)}(1)=\overset{2}{a_+}. \eq In such case,
(\ref{5non})-(\ref{5nhbc}) can be transformed into \bq
-V^{(5)}(x)+\alpha_2\, V^{(4)}(x)+\beta_2\, V^{(3)}(x)-\gamma_2\,
V^{(2)}(x)-\delta_2\, V^{(1)}(x)+\mu_2\, V(x)=f^{*}(x),\quad x\in
I=(-1,1),
 \label{5hom}\eq
subject to the homogenous boundary conditions \bq
V(\pm1)=V^{(1)}(\pm1)=V^{(2)}(1)=0,\label{homg5}\nonumber\eq
where
\bq
V(x)=u(x)+a_{0}+a_{1}x+a_{2}x^{2}+a_{3}x^{3}+a_{4}x^{4},\label{t}\nonumber\eq\quad
with
\begin{eqnarray*}
\dd a_{0}&=&\frac{1}{16} \left(-2 \overset{1}{a_-}+8 \overset{1}{a_+}-2 \overset{2}{a_+}-5 a_{-}-11 a_+\right),\\
\dd a_{1}&=&\frac{1}{4} \left(\overset{1}{a_-}+\overset{1}{a_+}+3 a_{-}-3 a_+\right),\\
\dd a_{2}&=&\frac{1}{8} \left(-6 \overset{1}{a_+}+2 \overset{2}{a_+}-3 a_-+3 a_+\right),\\
\dd a_{3}&=&\frac{1}{4} \left(-\overset{1}{a_-}-\overset{1}{a_+}-a_-+a_+\right),\\
\dd a_{4}&=&\frac{1}{16} \left(-2 \overset{2}{a_+}+4
\overset{1}{a_+}+2 \overset{2}{a_-}+3 a_{-}-3 a_+\right),
\end{eqnarray*}
and \begin{eqnarray*}
f^{*}(x)\dd=&&(\mu_2\,a_{0}-\delta_2\,a_{1}-2\g_2\, a_{2}+6\bt\,
a_{3}+24\al_2\, a_{4})+ (\mu_2\, a_{1}-2\delta_2\, a_{2}-6\g_2\,
a_{3}+24\bt_2\, a_{4})x\\\dd& &+(\mu_2\, a_{2}-3\delta_2\,
a_{3}-12\g_2\, a_{4})x^{2}+(\mu_2\, a_{3}-4\delta_2\,
a_{4})x^{3}+\mu_2\, a_{4}x^{4}+f(x).\end{eqnarray*}
 If we apply the Petrov-Galerkin method to the modified equation
(\ref{5hom}), we get the equivalent system of equations \bq
(B_2+\al\,G_4+\bt\,G_3+\g\,G_2+\delta\,G_1+\mu\,G_0)\textbf{a}=
\textbf{f}^*,\nonumber\eq where $B_2,G_i,0\leqslant i\leqslant4$ are
the matrices defined in Theorem \ref{thm6}, and
$\textbf{f}^*=(f_0^*,f_1^*,\dots,f_{N-5}^*)$,
 \bq f_k^* =
\begin{cases}
   \mu_2\,a_{0}-\delta_2\,a_{1}-2\g_2\, a_{2}+6\bt_2\,
a_{3}+24\al_2\, a_{4},\quad &k = 0, \\
  \frac{8}{7}(\mu_2\, a_{1}-2\delta_2\, a_{2}-6\g_2\,
a_{3}+24\bt_2\, a_{4}),\quad &k = 1, \\
   \frac{4}{3}(\mu_2\, a_{2}-3\delta_2\, a_{3}-12\g_2\,
a_{4}),\quad &k = 2, \\
 \frac{50}{33}(\mu_2\, a_{3}-4\delta_2\, a_{4}),\quad &k = 3, \\
    \frac{238}{143}\mu_2\,
a_{4}, \quad &k= 4, \\
f_k, \quad & k\geqslant5,\\
  \end{cases}
\nonumber\eq where,
$f_k=\dd\int_{-1}^1(1-x^2)^2(1+x)\,R_k^{(2,3)}(x)\,f(x)\,dx$.

\section{Condition number of the resulting matrices} \hspace*{16pt}For the direct
collocation method, the condition numbers behave like\ $O(N^{6})$
and $O(N^{10})$\ for third- and fifth-order respectively ($N$:
maximal degree of polynomials). In this paper we obtain improved
condition numbers with\ $O(N^{4})$ and $O(N^{6})$ respectively for
third- and fifth-order. The advantages with respect to propagation
of rounding errors is demonstrated.\s
 \h  For GJPGM, the
resulting systems from the two differential equations\
$u^{(3)}(x)=f(x)$ and $-u^{(5)}(x)=f(x)$\ are \ $B_{1}\,
{\mathbf{a^{1}}}={\mathbf{f^*}}$\ and \ $B_{2}\,
{\mathbf{a^{2}}}={\mathbf{f^*}}$, where \ $B_{1}$\ and \ $B_{2}$ are
two diagonal matrices whose diagonal elements are given by
$b^1_{kk}$ and $b^2_{kk}$, where
$$b^1_{kk}=2(k+1)(k+3),\quad b^2_{kk}=3(k+1)(k+2)(k+4)(k+5).$$
Thus we note that the condition numbers of the matrices\ $B_{1}$ and
$B_{2}$\ behave like $O(k^{2})$ and $O(k^{4})$\ respectively for
large values of\ $k$. The evaluation of the condition numbers for
the matrices\ $B_{1}$ and $B_{2}$ \ are easy because of the special
structure of
 them, since $B_{1}$ and $B_{2}$\ are diagonal matrices, so their eigenvalues are their diagonal
elements, and the condition number in such case has the definition\\[0.2cm]
 $$\mathrm {Condition\  number\ of \ the \
matrix=\dd \frac{Max\ (eigenvalue\  of\  the\  matrix)} {Min\
(eigenvalue\  of\  the\  matrix)}}.$$
\h In Table 1 we list the
values of the conditions numbers for the matrices\
 $B_1$ and $B_{2}$, respectively, for different values of \ $N$.
\vspace{50pt}
 \begin{center}
  \begin{center}
Table\  1\\ Condition number for the matrix \ $B_{n},n=1,2$\\[0.1cm]
\end{center}
\begin{center}
\begin{tabular}
{|c|c|c|c|c|c|c|}\hline $n$&$N$&$\alpha
_{min}$&$\alpha_{max}$&Cond($B_{n}$)&Cond($B_{n}$)/$N^{2n}$\\\hline
&16&&448&74.667&2.917\ .\ $10^{-1}$\\[0.1cm]
&20&&720&120&3.000\ .\ $10^{-1}$\\[0.1cm]
&24&&1056&176&3.056\ .\
$10^{-1}$\\[0.1cm]
 1&28&6&1456&242.667&3.095\ .\ $10^{-1}$\\[0.1cm]
&32&&1920&320&3.125\ .\ $10^{-1}$\\[0.1cm]
&36&&2448&408&3.148\ .\ $10^{-1}$\\[0.1cm]
 &40&&3040&506.667&3.167\ .\ $10^{-1}$\\[0.1cm]
\hline &16&&112320&936&1.428\ .\ $10^{-2}$\\[0.1cm]
&20&&310080&120&1.615\ .\ $10^{-2}$\\[0.1cm]
&24&&695520&5796&1.747\ .\
$10^{-2}$\\[0.1cm]
 2&28&120&1.361$\ .\ 10^{6}$&11340&1.845$\ .\ 10^{-2}$\\[0.1cm]
&32&&2.416$\ .\ 10^{6}$&20137.6&1.920\ .\ $10^{-2}$\\[0.1cm]
&36&&$3.992\ .\ 10^{6}$&33264&$1.981\ .\ 10^{-2}$\\[0.1cm]
 &40&&6.234\ .\ $10^{6}$&51984&2.029\ .\ $10^{-2}$\\
\hline
\end{tabular}
\end{center}
\end{center}
 {\bf {Remark 1.}} If we add
 $\dd\sum_{q=1}^{3}E_{3-q}\ (1\le q\le 3)$ and \ $\dd\sum_{q=1}^{5}G_{5-q}\ (1\le q\le 5)$\ , where
 the matrices\ $E_{3-q}$\ and\ $G_{5-q}$\ are the matrices their nonzero elements are given
 explicitly in Theorems \ref{thm4} and \ref{thm6} respectively,
 to the matrices $B_1$ and $B_2$ respectively, then we find that the
eigenvalues of matrices $D_{1}=
B_{1}+\dd\sum_{q=1}^{3}E_{3-q}$,\quad$D_{2}=
B_{2}+\dd\sum_{q=1}^{5}G_{5-q}$ are all real positive. Moreover, the
effect of these additions does not significantly change the values
of the condition numbers for the systems. This means that matrices
$B_1$ and $B_2$, which resulted from the highest derivatives of the
differential equations under investigation, play the most important
role in the propagation of the roundoff errors. The numerical
results of Table 2 illustrate this remark.\\
 \begin{center}
\begin{center}
Table\  2\\ Condition number for the matrix \ $D_{n},n=1,2$\\[0.1cm]
\begin{center}
\begin{tabular}
{|c|c|c|c|c|c|c|c|c|c|}\hline
$N$&Cond($D_{1}$)&$\mathrm{\dd\frac{Cond(D_{1})}{N^{2}}}$&Cond($D_{2}$)&$\mathrm{\dd\frac{Cond(D_{2})}{N^{4}}}$
 \f\hline
 &&&&\\[-0.3cm]
 16&55.287&2.159\ . $10^{-1}$&827.262&1.262\ . $10^{-2}$\f
 20&88.679&2.217\ . $10^{-1}$&2278.4&1.424\ . $10^{-2}$\f
  24&129.929&2.256\ . $10^{-1}$&5104.45&1.539\ . $10^{-2}$\f
   28&179.037&2.284\ . $10^{-1}$&9980.18&1.624\ . $10^{-2}$\f
    32&236.003&2.305\ . $10^{-1}$&17715.3&1.689\ . $10^{-2}$\f
     36&300.826&2.321\ . $10^{-1}$&2925.4&1.742\ . $10^{-2}$\f
      40&373.507&2.334\ . $10^{-1}$&45677.4&1.784\ . $10^{-2}$\f
 \hline
\end{tabular}
\end{center}
\end{center}
\end{center}

\newpage
\section{Numerical results}

{\bf Example 1.}\s Consider the one dimensional  equation \bq
u^{(3)}(x)-\al_1\,u^{(2)}(x)-\bt_1\,u^{(1)}(x)+\g_1\,u(x)=f(x),\qquad
u(\pm 1)=u^{(1)}(1)=0,\label{numeriacal1}\eq where\, $f(x)$\,is
chosen such that the exact solution for (\ref{numeriacal1})
is\,$u(x)=(1-x^2)\,x^j\,\sin(m\,\pi\,x)$, $j,m\in\mathbb{N}$. We
have\, $u_{N}(x)=\dd\sum_{k=0}^{N-3}a_k
(1-x^2)(1-x)R_{k}^{(2,1)}(x)$ and the vector of unknowns\\
$\textbf{a}=(a_0,a_1,\dots,a_{N-3})^T$ is the solution of the system
\q$(B_1+\dd \al_1\, E_{2}+\dd \bt_1\, E_{1}+\dd \g_1\, E_{0})\,
\mathbf{a}=\mathbf{f^*},$ \s
 where the nonzero elements of the
matrices\ $B_1$\ and\  $E_{i,n}\ (0\le i\le 2)$\ are given
explicitly in Theorem \ref{thm6}.\s
 Table 3 lists the maximum pointwise
error $E$ for $u-u_N$ to (\ref{numeriacal1}), using GJPGM for
various values of $j,m$ and the coefficients $\al_1,\bt_1$ and
$\g_1.$\\
\begin{center}
\begin{center} Table\  3\\ {Maximum pointwise error for\ $u-u_N$\
for\ $N=8,12,16,20,24$}
\begin{center}\begin{tabular}
{|c|c|c|c|c|c|c|c|c|c|c|}\hline
N&$j$&$m$&$\al_1$&$\bt_1$&$\g_1$&$E$&$\al_1$&$\bt_1$&$\g_1$&$E$\\
\hline
 8&&&&&&$2.558\ .\ 10^{-3\,\,\,}$&8&$8^2$&$8^3$&$2.872\ .\ 10^{-3\,\,\,}$\\
12&&&&&&$1.909\ .\ 10^{-6\,\,\,}$&12&$12^2$&$12^3$&$2.224\ .\ 10^{-6\,\,\,}$\\
16&1&1&0&0&0&$4.368\ .\ 10^{-10}$&16&$16^2$&$16^3$&$4.122\ .\ 10^{-10}$\\
20&&&&&&$2.811\ .\ 10^{-14}$&20&$20^2$&$20^3$&$2.961\ .\ 10^{-14}$\\
24&&&&&&$3.885\ .\ 10^{-16}$&24&$24^2$&$24^3$&$2.220\ .\ 10^{-16}$\\
 \hline
8&&&&&&$4.472\ .\ 10^{-3\,\,\,}$&$8^3$&$8^2$&8&$9.409\ .\ 10^{-3\,\,\,}$\\
12&&&&&&$3.687\ .\ 10^{-6\,\,\,}$&$12^3$&$12^2$&12&$8.399\ .\ 10^{-6\,\,\,}$\\
16&0&1&2&3&4&$6.660\ .\ 10^{-10}$&$16^3$&$16^2$&16&$2.178\ .\ 10^{-9\,\,\,}$\\
20&&&&&&$4.529\ .\ 10^{-14}$&$20^3$&$20^2$&20&$1.455\ .\ 10^{-13}$\\
24&&&&&&$7.771\ .\ 10^{-16}$&$24^3$&$24^2$&24&$6.106\ .\ 10^{-16}$\\
 \hline
8&&&&&&$1.119\ .\ 10^{-1\,\,\,}$&8&$8^2$&$8^3$&$1.341\ .\ 10^{-1\,\,\,}$\\
12&&&&&&$2.060\ .\ 10^{-3\,\,\,}$&12&$12^2$&$12^3$&$2.430\ .\ 10^{-3\,\,\,}$\\
16&1&2&0&1&0&$8.934\ .\ 10^{-6\,\,\,}$&16&$16^2$&$16^3$&$8.459\ .\ 10^{-6\,\,\,}$\\
20&&&&&&$1.009\ .\ 10^{-8}$&20&$20^2$&$20^3$&$1.072\ .\ 10^{-8\,\,\,}$\\
24&&&&&&$4.156\ .\ 10^{-12}$&24&$24^2$&$24^3$&$4.746\ .\ 10^{-12}$\\
 \hline
8&&&&&&$1.578\ .\ 10^{-2\,\,\,}$&$8^3$&$8^2$&8&$3.927\ .\ 10^{-1\,\,\,}$\\
12&&&&&&$3.749\ .\ 10^{-5\,\,\,}$&$12^3$&$12^2$&12&$8.773\ .\ 10^{-3\,\,\,}$\\
16&2&1&1&0&1&$1.324\ .\ 10^{-8\,\,\,}$&$16^3$&$16^2$&16&$4.369\ .\ 10^{-5\,\,\,}$\\
20&&&&&&$1.539\ .\ 10^{-12}$&$20^3$&$20^2$&20&$5.206\ .\ 10^{-8\,\,\,}$\\
24&&&&&&$2.498\ .\ 10^{-16}$&$24^3$&$24^2$&24&$2.417\ .\ 10^{-11}$\\
\hline
\end{tabular}
\end{center}\end{center}
\end{center}
{\bf Example 2.}\s Consider the one dimensional fifth-order equation
\bq
-u^{(5)}(x)+\al_2\,u^{(4)}(x)+\bt_2\,u^{(3)}(x)-\g_2\,u^{(2)}(x)-\delta_2\,u^{(1)}(x)+\mu_2\,u(x)=f(x),\nonumber\eq\bq
u(\pm 1)=u^{(1)}(\pm 1)=u^{(2)}(1)=0,\label{numerical2}\eq where\,
$f(x)$ is chosen such that the exact solution for (\ref{numerical2})
is $u(x)=(1-x^2)^2(1-x)\,\cosh(m\,x),$ $m\in\mathbb{R}$. We have\,
$u_{N}(x)=\dd\sum_{k=0}^{N-5}a_k
(1-x^2)^2(1-x)R_{k}^{(3,2)}(x)$ and the vector of unknowns\\
$\textbf{a}=(a_0,a_1,\dots,a_{N-5})^T$ is the solution of the system
$$(B_2+\al_2\,G_4+\bt_2\,G_3+\g_2\,G_2+\delta_2\,G_1+\mu_2\,G_0)\textbf{a}=
\textbf{f}^*,$$
 where the nonzero elements of the
matrices\ $B_2$\ and\  $G_{i,n}\ (0\le i\le 4)$\ are given
explicitly in Theorem \ref{thm6}.\s
 Table 4 lists the maximum pointwise error $E$ for
$u-u_N$ to (\ref{numerical2}), using GJPGM for various values of $m$
and the coefficients $\al_2,\bt_2$ and $\g_2$ and $\delta_2$ and
$\mu_2.$\\
\begin{center}
\begin{center}Table\  4\\{Maximum pointwise error for\ $u-u_N$\\
for $N=8,12,16,20,24$}
\begin{center}\begin{tabular} {|c|c|c|c|c|c|c|c|}\hline
N&$m$&$\al_2$&$\bt_2$&$\g_2$&$\delta_2$&$\mu_2$&$E$\\
\hline
 8&&&&&&&$1.135\ .\ 10^{-1\,\,\,}$\\
12&&&&&&&$2.464\ .\ 10^{-4\,\,\,}$\\
16&3&0&0&0&0&0&$8.165\ .\ 10^{-8\,\,\,}$\\
20&&&&&&&$1.098\ .\ 10^{-11}$\\
24&&&&&&&$5.551\ .\ 10^{-16}$\\
 \hline
8&&&&&&&$1.102\ .\ 10^{-3\,\,\,}$\\
12&&&&&&&$3.164\ .\ 10^{-8\,\,\,}$\\
16&1&1&1&1&1&1&$1.312\ .\ 10^{-13}$\\
20&&&&&&&$2.220\ .\ 10^{-16}$\\
24&&&&&&&$2.220\ .\ 10^{-16}$\\
 \hline
8&&&&&&&$1.927\ .\ 10^{-2\,\,\,}$\\
12&&&&&&&$8.652\ .\ 10^{-6\,\,\,}$\\
16&2&0&1&0&1&0&$5.776\ .\ 10^{-10}$\\
20&&&&&&&$1.598\ .\ 10^{-14}$\\
24&&&&&&&$3.330\ .\ 10^{-16}$\\
 \hline
 8&&&&&&&$6.658\ .\ 10^{-5\,\,\,}$\\
12&&&&&&&$1.215\ .\ 10^{-10}$\\
16&$\frac{1}{2}$&1&2&1&2&1&$6.661\ .\ 10^{-16}$\\
20&&&&&&&$6.661\ .\ 10^{-16}$\\
24&&&&&&&$6.661\ .\ 10^{-16}$\\
 \hline
\end{tabular}
\end{center}\end{center}
\end{center}
{\bf Example 3.}\s Consider the one dimensional nonhomogeneous
equation \bq
u^{(3)}(x)-\al_1\,u^{(2)}(x)-\bt_1\,u^{(1)}(x)+\g_1\,u(x)=f(x),\nonumber\eq\bq
u(\pm 1)=\pm \sinh(m),\quad u^{(1)}(1)=m\,\cosh(m),\quad
m\in\mathbb{R},\label{numerical3}\eq where\, $f(x)$ is chosen such
that
the exact solution for (\ref{numerical3}) is\quad $u(x)=\sinh(m\,x)$.\\
setting \bq
V(x)=u(x)-\sinh(m)\,x+\frac{1}{2}\big[m\,\cosh(m)-\sinh(m)\big]\,(1-x^2),\nonumber\eq
then the differential equation (\ref{numerical3}) is equivalent to
the differential equation\bq\,u^{(3)}(x)\,-\al_1\,
u^{(2)}(x)-\bt_1\, u^{(1)}(x)+\g_1\, u(x)=f(x),\qquad x\in
(-1,1),\quad u(\pm 1)=u^{(1)}(1)=0.
 \nonumber\eq  In Table 5 we list the maximum pointwise
error $E$ for $u-u_N$ to (\ref{numerical3}), using GJPGM for various
values of $m$ and the coefficients $\al_1,\bt_1$ and $\g_1$.
\begin{center}
\begin{center}Table\  5\\{Maximum pointwise error for\ $u-u_N$\\
for $N=8,12,16$}

\begin{center}\begin{tabular} {|c|c|c|c|c|c|}\hline
N&$m$&$\al_1$&$\bt_1$&$\g_1$&$E$\\
\hline
 8&&&&&$2.804\ .\ 10^{-8\,\,\,}$\\
12&1&0&0&0&$9.536\ .\ 10^{-14}$\\
16&&&&&$1.110\ .\ 10^{-16}$\\
 \hline
8&&&&&$2.819\ .\ 10^{-8\,\,\,}$\\
12&1&1&1&1&$9.736\ .\ 10^{-14}$\\
16&&&&&$1.110\ .\ 10^{-16}$\\
 \hline
8&&&&&$1545\ .\ 10^{-5\,\,\,}$\\
12&2&0&1&0&$8.248\ .\ 10^{-10}$\\
16&&&&&$1.310\ .\ 10^{-14}$\\
 \hline
 8&&&&&$6.919\ .\ 10^{-4\,\,\,}$\\
12&3&1&0&1&$1.808\ .\ 10^{-7\,\,\,}$\\
16&&&&&$1.414\ .\ 10^{-11}$\\
 \hline
\end{tabular}
\end{center}\end{center}
\end{center}


\section{Concluding remarks}
\h In this paper, an algorithm for obtaining a numerical spectral
solution for third- and fifth-order differential equations using
certain nonsymmetric generalized Jacobi-Galerkin method is
discussed. The algorithms are very efficient. We have found that,
our choice for a certain family of basis functions to solve third-
and fifth-order differential equations always lead to linear systems
with band matrices that can be efficiently inverted. These special
structures, of course simplifies greatly the numerical computations.
In particular, for some particular third- and fifth-order
differential equations, the resulting systems of these equations are
diagonal. high accurate approximate solutions are achieved using a
few number of the generalized Jacobi polynomials. The obtained
numerical results are comparing favorably with  the analytical ones.
Furthermore, we do believe that the proposed technique can be
applied to Korteweg-de Vries (KDV) equations.

{\small}
\end{document}